 \newtheorem{thm}{Theorem}[section]
 \newtheorem{lem}[thm]{Lemma}
 \newtheorem{prop}[thm]{Proposition}
 \theoremstyle{definition}
 \newtheorem{defn}[thm]{Definition}
 \theoremstyle{remark}
 \newtheorem{rem}[thm]{Remark}
 \newtheorem{ex}[thm]{Example}
 \numberwithin{equation}{section}
\newcommand{\om}[1]{\mathcal{{ #1}}}	
\newcommand{\mon}[1]{\widehat #1}		
\newcommand{\side}[1]{\widetilde #1}	
\newcommand{\Hs}{\mathcal{H}}	
\newcommand{\HS}{\widetilde{\mathcal{H}}}	
\newcommand{\Hss}{\widehat{\mathcal{H}}}	
\newcommand{\hs}{\widecheck{\mathcal{H}}}	
\newcommand{\Ls}{\mathcal{L}}
\newcommand{\Bs}{\mathcal{B}}
\newcommand{\R}{\mathbb{R}}
\newcommand{\C}{\mathbb{C}}
\newcommand{\dom}{\mathop{\mathcal{D}}}
\newcommand{\Cdom}{\mathop{\Omega}}
\newcommand{\delt}{\widehat{\delta}}
\begin{document}

\title[On Equivalence of Operator Matrix Functions]{On Equivalence and Linearization of Operator Matrix Functions with Unbounded Entries}

\author{Christian Engstr\"om}
\address{Department of Mathematics and Mathematical Statistics, Ume\aa \ University, SE-901 87 Ume\aa, Sweden}
\email{christian.engstrom@math.umu.se}
\author{Axel Torshage} 
\address{Department of Mathematics and Mathematical Statistics, Ume\aa \ University, SE-901 87 Ume\aa, Sweden}
\email{axel.torshage@math.umu.se}

\subjclass{Primary 47A56; Secondary 47A10}

\keywords{Equivalence after extension, Block operator matrices, Operator functions, Spectrum}

\date{}

\begin{abstract}
In this paper we present equivalence results for several types of unbounded operator functions. A generalization of the concept equivalence after extension is introduced and used to prove equivalence and linearization for classes of unbounded operator functions. Further, we deduce methods of finding equivalences to operator matrix functions that utilizes equivalences of the entries. Finally, a method of finding equivalences and linearizations to a general case of operator matrix polynomials is presented.
\end{abstract}

\maketitle
\section{Introduction}
Spectral properties of unbounded operator matrices are of major interest in operator theory and its applications \cite{book}. Important examples are systems of partial differential equations with $\lambda$-dependent coefficients or boundary conditions \cite{MR1042212,MR1815190,MR1354980,MR3543766,ATEN}.  A concept of equivalence can be used to compare spectral properties of different  operator functions and the problem of classifying bounded analytic operator functions modulo equivalence has been studied intensely \cite{MR0482317,MR0473894,MR2363355,MR634171}. The properties preserved by equivalences include the spectrum and for holomorphic operator functions there is a one-to-one correspondence between their Jordan chains,  \cite[Proposition 1.2]{MR1155350}. Our aim is to generalize some of the results in those articles and study a concept of equivalence for classes of operator functions whose values are unbounded linear operators. A prominent result in this direction is the equivalence between an operator matrix and its Schur complements \cite{MR1285306,MR1382107,book}.  

In this paper, we consider systems described by $n\times n$ operator matrix functions and study a concept of equivalence when some of the entries are Schur complements, polynomials, or can be written as a product of operator functions. Examples of this type are the operator matrix function with quadratic polynomial entries that were studied in \cite{MR1911850} and functions with rational and polynomial entries in plasmonics \cite{Mortensen}. In order to extend previous results to cases with unbounded entries, we generalize in definition \ref{1eaet} the concept of equivalence after extension in \cite{MR0482317}. This new concept can be used to compare spectral properties of two unbounded operator functions, but also for determining the correspondence between the domains and when two operator functions are simultaneously closed.
 Our main results are (i) equivalence results for operator matrix functions containing unbounded Schur complement entries (Theorem \ref{2schurrat0}) and polynomial entries (Theorem \ref{2genP1lin}) and
(ii) a systematic approach to linearize operator matrix functions with polynomial entries (Theorem \ref{polygenlin} together with the algorithm in Proposition \ref{polyrref} or Proposition \ref{polyrref2}).

Throughout this paper, $\Hs$ with or without subscripts, tildes, hats, or primes denote complex Banach spaces. Moreover, $\Ls(\Hs,\HS)$ denotes the collection of linear (not necessarily bounded) operators between $\Hs$ and $\HS$. The space of everywhere defined bounded operators between $\Hs$ and $\HS$ is denoted $\Bs(\Hs,\HS)$ and we use the notations $\Ls(\Hs):=\Ls(\Hs,\Hs)$ and $\Bs(\Hs):=\Bs(\Hs,\Hs)$. For convenience, a product Banach space of $d$ identical Banach spaces is denoted
\[
	\Hs^d:=\bigoplus_{i=1}^d \Hs,\quad \text{where}\quad \Hs^d:=\{0\}\text{ for }d\leq0.
\]
The domain of an operator $A\in\Ls(\Hs,\HS)$ is denoted $\dom (A)$ and if $A$ is closable the closure of $A$ is denoted $\overline{A}$. In the following, we denote for a linear operator $A$ the spectrum and resolvent set by $\sigma(A)$ and $\rho(A)$, respectively. The point spectrum $\sigma_{p}(A)$, continuous spectrum $\sigma_{c}(A)$, and residual spectrum $\sigma_{r}(A)$ are defined as in \cite[Section I.1]{MR929030}. 

Let $\Cdom\subset \C$ be a non-empty open set and let $T:\Cdom\rightarrow \Ls(\Hs,\Hs')$ denote an operator function. Then the spectrum of $T$ is
\[
	\sigma(T):=\{\lambda\in\Cdom\,:\, 0\in\sigma (T(\lambda))\}.
\]
An operator matrix function $\om{T}:\Cdom\rightarrow \Ls(\Hs\oplus\HS,\Hs'\oplus\HS')$ have a representation  as 
\[
	\om{T}(\lambda):=\begin{bmatrix}
		A(\lambda) &B(\lambda)\\
		C(\lambda) &D(\lambda)
	\end{bmatrix}, \quad \lambda\in\Cdom.
\]
 Unless otherwise stated the \emph{natural domain}
\[ 
	\dom(\om{T}(\lambda)):=\dom(A(\lambda))\cap\dom(C(\lambda))\oplus\dom(B(\lambda))\cap\dom(D(\lambda)),\quad \lambda\in\Cdom
\] 
is assumed \cite[Section 2.2]{book}. 

The paper is organized as follows. In Section $2$ we generalize concepts of equivalence to study functions whose values are unbounded operators.
In particular, the concept \emph{equivalence after operator function extension} is defined, which enable us to show an equivalence for pairs of unbounded operator functions. 
We provide natural generalizations of results that for bounded operator functions are well known. Further, we show how equivalence for an entry in an operator matrix function can be used to find an equivalence for the full operator matrix function.
 
Section $3$ contains three subsections, one for each of the studied equivalences: 
Schur complements, \cite{book,MR990188,MR1285306,MR3543766},  multiplication of operator functions, \cite{MR0482317}, and operator polynomials, \cite{MR511976,MR971506}, each structured similarly. First, an equivalence for the class of operator functions is presented and then we show how this equivalence can be used to prove equivalences for operator matrix functions. 
 
In Section $4$ we use the results from Section $3$ to also find equivalences between a class of operator matrix functions and operator matrix  polynomials. Moreover, we discuss two different ways of finding linear equivalences (linearizations) of operator matrix polynomials. The section is concluded with an example on how the results from Section $3$ and Section $4$  can be used jointly to linearize  operator matrix functions.
\section{Equivalence and equivalence after operator function extension}

In this section we introduce the concepts used to classify unbounded operator functions up to equivalence. These concepts were used to study bounded operator functions \cite{MR0482317,MR2151934} and we present natural generalizations to the unbounded case.

Let $\Cdom_S,\Cdom_T\subset\C$ and consider the operator functions $S:\Cdom_S\rightarrow \Ls(\Hs,\Hs')$ and $T:\Cdom_T\rightarrow \Ls(\Hss,\Hss')$ with domains $\dom (S)$ and $\dom (T)$, respectively. Then $S$ and $T$ are called \emph{equivalent} on $\Cdom\subset\Cdom_S\cap\Cdom_T$ if there exist operator functions 
$E:\Cdom\rightarrow\Bs(\Hss',\Hs')$ and $F:\Cdom\rightarrow\Bs(\Hs,\Hss)$ invertible for $\lambda\in\Cdom$ such that 
\begin{equation}\label{1eqdef}
	S(\lambda)=E(\lambda)T(\lambda)F(\lambda),\quad \dom(S(\lambda))=F(\lambda)^{-1}\dom(T(\lambda)).
\end{equation}
 It can easily be verified that \eqref{1eqdef} is an equivalence relation.  
 
 Many of the results in this paper are point-wise, i.e. for a fixed operator. However, the results on spectral properties and the main results on linearization in Section 3.3 and in Section 4 require dependence on $\lambda$. For consistency, we therefore state all theorems for operator functions.
The following proposition is immediate from its construction \cite{MR1382107}, \cite[Lemma 2.3.2]{book}.
\begin{prop}\label{1defcor} 
Assume that $S:\Cdom_S\rightarrow \Ls(\Hs,\Hs')$ is equivalent to $T:\Cdom_T\rightarrow \Ls(\Hss,\Hss')$ on $\Cdom\subset \Cdom_S\cap \Cdom_T$, and let $E$ and $F$ denote the operator functions in the equivalence relation \eqref{1eqdef}. Then the operator $S(\lambda)$ is closed (closable) for $\lambda \in \Cdom$ if and only if $T(\lambda)$ is closed (closable), where the closure of a closable $S(\lambda)$ is
\[
	\overline{S(\lambda)}=E(\lambda)\overline{T(\lambda)}F(\lambda),\quad \dom(\overline{S(\lambda)})=F^{-1}(\lambda)\dom( \overline{T(\lambda)}).
\]
Let $S_{\Cdom}$ and $T_{\Cdom}$ denote the restrictions of  $S$ and $T$ to $\Cdom$. Then
\[
\sigma(\overline{T}_{\Cdom})=\sigma(\overline{S}_{\Cdom}),\ \sigma_p(\overline{T}_{\Cdom})=\sigma_p(\overline{S}_{\Cdom}),\ \sigma_c(\overline{T}_{\Cdom})=\sigma_c(\overline{S}_{\Cdom}),\ \sigma_r(\overline{T}_{\Cdom})=\sigma_r(\overline{S}_{\Cdom}).
\]
\end{prop}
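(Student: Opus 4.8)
The plan is to exploit the equivalence relation $S(\lambda)=E(\lambda)T(\lambda)F(\lambda)$ directly, using that $E(\lambda)$ and $F(\lambda)$ are boundedly invertible, to transfer closedness, closability, and the precise description of the closure from $T(\lambda)$ to $S(\lambda)$. Throughout I fix $\lambda\in\Cdom$ and suppress it, writing $S=ETF$ with $\dom(S)=F^{-1}\dom(T)$.

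First I would establish the closedness transfer. The natural approach is to observe that the graph of $S$ is the image of the graph of $T$ under the bounded invertible map $(F^{-1},E)\colon \Hss\oplus\Hs'\to\Hs\oplus\Hs'$, more precisely the map sending $(y,z)\mapsto(F^{-1}y,Ez)$. Indeed, if $x=F^{-1}y\in\dom(S)$ then $Sx=ETFx=ETy$, so $(x,Sx)=(F^{-1}y,E(Ty))$ runs exactly over the image of $\{(y,Ty):y\in\dom(T)\}$. Since $(F^{-1},E)$ and its inverse $(F,E^{-1})$ are everywhere-defined bounded operators on the product space, they are homeomorphisms, and a set is closed if and only if its image under a homeomorphism is closed. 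Hence the graph of $S$ is closed precisely when the graph of $T$ is, which is the claim that $S$ is closed iff $T$ is.

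For closability and the closure formula I would argue similarly but through the closure of the graph. By definition $S$ is closable iff the closure $\overline{\mathrm{graph}(S)}$ is the graph of an operator, i.e.\ contains no element $(0,w)$ with $w\neq0$. Applying the homeomorphism $(F^{-1},E)$, I get $\overline{\mathrm{graph}(S)}=(F^{-1},E)\,\overline{\mathrm{graph}(T)}$ because homeomorphisms commute with closure. A point $(0,w)$ lies in $\overline{\mathrm{graph}(S)}$ iff $(F^{-1},E)^{-1}(0,w)=(F\cdot 0,E^{-1}w)=(0,E^{-1}w)$ lies in $\overline{\mathrm{graph}(T)}$; since $E^{-1}$ is injective, $w\neq0$ corresponds to a nonzero second coordinate, so $S$ is closable iff $T$ is. When both are closable, the same computation as in the closed case, now applied to $\overline{T}$ whose graph is $\overline{\mathrm{graph}(T)}$, shows that $E\,\overline{T}\,F$ has graph $(F^{-1},E)\,\overline{\mathrm{graph}(T)}=\overline{\mathrm{graph}(S)}$, which yields $\overline{S}=E\,\overline{T}\,F$ together with $\dom(\overline{S})=F^{-1}\dom(\overline{T})$.

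Finally, the spectral identities follow because $\overline{S}$ and $\overline{T}$ are themselves equivalent via the \emph{same} $E,F$: the formula $\overline{S(\lambda)}=E(\lambda)\overline{T(\lambda)}F(\lambda)$ with $\dom(\overline{S(\lambda)})=F^{-1}(\lambda)\dom(\overline{T(\lambda)})$ is exactly an instance of \eqref{1eqdef} for the restricted functions $\overline{S}_{\Cdom}$ and $\overline{T}_{\Cdom}$. One then checks the coincidence of each spectral part pointwise: since $E,F$ are bounded with bounded inverses, $0\in\sigma(\overline{S})$ iff $0\in\sigma(\overline{T})$, and the same boundedness guarantees that bijectivity, injectivity with dense range, and the range/closure dichotomies defining $\sigma_p,\sigma_c,\sigma_r$ are preserved under pre- and post-composition with $F$ and $E$; taking the union over $\lambda\in\Cdom$ gives the displayed equalities of spectra. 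The main obstacle I anticipate is the bookkeeping in the closure step: one must verify carefully that the domain condition $\dom(S)=F^{-1}\dom(T)$ is exactly what makes the graph of $S$ equal to the $(F^{-1},E)$-image of the graph of $T$ (rather than merely contained in it), so that the homeomorphism argument transfers closure in both directions without slack.
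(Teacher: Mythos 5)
Your proof is correct. Note, however, that the paper contains no argument for this proposition at all: it is declared ``immediate from its construction'' with a pointer to \cite{MR1382107} and \cite[Lemma 2.3.2]{book}, so there is nothing in the paper to match step by step. Your graph-transport argument is a complete, self-contained substitute, and it is in substance the standard proof of the cited results: the key observation that $\mathrm{graph}(S(\lambda))$ is \emph{exactly} the image of $\mathrm{graph}(T(\lambda))$ under the homeomorphism $(y,z)\mapsto(F^{-1}(\lambda)y,E(\lambda)z)$, together with your correct remark that the domain condition $\dom(S(\lambda))=F^{-1}(\lambda)\dom(T(\lambda))$ is precisely what forces equality of graphs rather than mere inclusion, is what makes closedness, closability, the closure formula, and the domain identity all fall out at once. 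Two small points to tidy up: the map should be written $(F^{-1}(\lambda),E(\lambda))\colon \Hss\oplus\Hss'\to\Hs\oplus\Hs'$ (the second factor of the source space is $\Hss'$, the codomain of $T(\lambda)$, not $\Hs'$); and in the spectral step it would be worth one explicit line recording $\ker(\overline{S(\lambda)})=F(\lambda)^{-1}\ker(\overline{T(\lambda)})$ and $\mathrm{ran}(\overline{S(\lambda)})=E(\lambda)\,\mathrm{ran}(\overline{T(\lambda)})$, since these two identities are exactly what reduce the four pointwise equalities of spectral parts (injectivity, density of range, surjectivity, closedness of range) to the fact that $E(\lambda)$ and $F(\lambda)$ are homeomorphisms; taking the union over $\lambda\in\Cdom$ then gives the displayed set equalities as you say.
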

Gohberg et al. \cite{MR0482317} and Bart et al. \cite{MR2151934} studied a generalization of equivalence  called equivalence after extension. Here, we introduce a more general definition of equivalent after extension, which we for clarity call \emph{equivalence after operator function extension}. 
\begin{defn}\label{1eaet}
Let  $S:\Cdom_S\rightarrow \Ls(\Hs,\Hs')$ and $T:\Cdom_T\rightarrow \Ls(\Hss,\Hss')$ denote operator functions with domains $\dom (S)$ and $\dom (T)$, respectively. Assume there are operator functions $W_S:\Cdom\rightarrow \Ls(\hs_S,\hs_S)$ and $W_T:\Cdom\rightarrow \Ls(\hs_T,\hs_T)$ invertible on $\Cdom\subset\Cdom_S\cap\Cdom_T$ such that
\[
	\begin{array}{l l}
		S(\lambda)\oplus W_S(\lambda), & \dom(S(\lambda)\oplus W_S(\lambda))=\dom(S(\lambda))\oplus\dom(W_S(\lambda)),\vspace{5pt} \\
		T(\lambda)\oplus W_T(\lambda), & \dom(T(\lambda)\oplus W_T(\lambda))=\dom(T(\lambda))\oplus\dom(W_T(\lambda)),
	\end{array}
\]
are equivalent on $\Cdom$.  Then $S$ and $T$ are said to be \emph{equivalent after operator function extension on $\Cdom$}.
 The operator functions $S$ and $T$ are said to be \emph{equivalent after one-sided operator function extension} on $\Cdom$ if either $\hs_S$ or $\hs_T$ can be chosen to $\{0\}$. If $\hs_T$ can be chosen to $\{0\}$ then we say that $S$ is after $W_S$-\emph{extension} equivalent to $T$ on $\Cdom$.
\end{defn}

The definition of equivalent after extension in \cite{MR2151934} correspond in Definition \ref{1eaet} to the case $W_S(\lambda)= I_{\hs_S}$ and  $W_T(\lambda)= I_{\hs_T}$ for all $\lambda\in\Cdom$. We allow $W_S$ and $W_T$ to be unbounded operator functions and can therefore study a concept of equivalence for a larger class of unbounded operator function pairs $S$ and $T$. 

In particular, the equivalence results for Schur complements and polynomial problems presented in Section \ref{sec:rat} respectively Section \ref{sec:poly}, can not be described by an equivalence after extension with the identity operator. In the equivalence results for multiplication operators in Section \ref{sec:mult} the operator function $W$ is bounded (actually $W(\lambda)= I$ for all $\lambda\in \C$). Thus, in that case the standard definition of equivalence after extension is sufficient as well.

Proposition \ref{1defcor} shows that two equivalent unbounded operator functions have the same spectral properties and it provides the correspondence between the domains. In the following proposition, those results are extended to include operator functions that are equivalent after operator function extension.
\begin{prop}\label{2defcor} 
Assume that $S:\Cdom_S\rightarrow \Ls(\Hs,\Hs')$ and $T:\Cdom_T\rightarrow \Ls(\Hss,\Hss')$, are equivalent after operator function extension on $\Cdom\subset \Cdom_S\cap\Cdom_T $. Let $W_S:\Cdom\rightarrow \Ls(\hs_S,\hs_S)$ and $W_T:\Cdom\rightarrow \Ls(\hs_T,\hs_T)$ denote the invertible operator functions such that $S(\lambda)\oplus W_S(\lambda)$ is equivalent to $T(\lambda)\oplus W_T(\lambda)$ for $\lambda\in\Cdom$ and let $E$, $F$ be the operator functions in the equivalence relation \eqref{1eqdef}. Define the operator
$\pi_{\Hs'}:\Hs'\oplus \hs_S\rightarrow \Hs'$ as $\pi_{\Hs'}u\oplus v=u$ and let $\tau_{\Hs}$ denote the natural embedding of $\Hs$ into $\Hs\oplus \hs_S$ given by $\tau_\Hs u=u\oplus0_{\hs_S}$. Then for $\lambda\in\Cdom$ we have the relations
\[
	\begin{array}{l}
	S(\lambda) =\pi_{\Hs'}E(\lambda)\begin{bmatrix}
	T(\lambda) & \\
	& W_T(\lambda)
	\end{bmatrix}F(\lambda)\tau_{\Hs},\\
	\dom(S(\lambda))=\pi_{\Hs}F^{-1}(\lambda)(\dom(T(\lambda))\oplus\dom(W_T(\lambda))),
	\end{array}
\]
and the operator $S(\lambda)$ is closed (closable) if and only if $T(\lambda)$ is closed (closable). The closure of a closable operator $S(\lambda)$ is
\[
	\begin{array}{l l}
	\overline{S(\lambda)}=\pi_{\Hs'}E(\lambda)\begin{bmatrix}
	\overline{T(\lambda)} & \\
	& W_T(\lambda)
	\end{bmatrix}F(\lambda)\tau_{\Hs},\\
	\dom(\overline{S(\lambda)})=\pi_{\Hs}F^{-1}(\lambda)(\dom(\overline{T(\lambda)})\oplus\dom(W_T(\lambda))),\\
	\end{array}
\] 
and we have then
\[
\sigma(\overline{T}_{\Cdom})=\sigma(\overline{S}_{\Cdom}),\ \sigma_p(\overline{T}_{\Cdom})=\sigma_p(\overline{S}_{\Cdom}),\ \sigma_c(\overline{T}_{\Cdom})=\sigma_c(\overline{S}_{\Cdom}),\ \sigma_r(\overline{T}_{\Cdom})=\sigma_r(\overline{S}_{\Cdom}),
\]
where $S_\Omega$ and $T_\Omega$ denote the restrictions of $S$ and $T$ to $\Omega$.
\end{prop}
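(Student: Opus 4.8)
The plan is to reduce every claim to Proposition~\ref{1defcor}, applied to the genuinely equivalent pair
\[
	S_\oplus(\lambda):=S(\lambda)\oplus W_S(\lambda),\qquad T_\oplus(\lambda):=T(\lambda)\oplus W_T(\lambda),
\]
and then to compress the resulting identities back to the corner by means of $\pi_{\Hs'}$ and $\tau_{\Hs}$. The one computation driving everything is that $\tau_{\Hs}$ feeds a zero into the $W_S$-slot: since $W_S(\lambda)0=0$ we have $S_\oplus(\lambda)\tau_{\Hs}u=S(\lambda)u\oplus 0$ for every $u\in\dom(S(\lambda))$, whence $\pi_{\Hs'}S_\oplus(\lambda)\tau_{\Hs}=S(\lambda)$, and $\tau_{\Hs}u\in\dom(S_\oplus(\lambda))$ exactly when $u\in\dom(S(\lambda))$ (here $0\in\dom(W_S(\lambda))$ automatically). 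Throughout I read invertibility of $W_S(\lambda),W_T(\lambda)$ as $0\in\rho(W_S(\lambda))\cap\rho(W_T(\lambda))$, i.e.\ a bounded everywhere-defined inverse; in particular these operators are closed.

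First I would record the representation and domain formulas. Substituting the equivalence $S_\oplus(\lambda)=E(\lambda)T_\oplus(\lambda)F(\lambda)$ into $S=\pi_{\Hs'}S_\oplus\tau_{\Hs}$ gives the stated expression for $S(\lambda)$, and the natural domain of the right-hand side is precisely $\{u:F(\lambda)\tau_{\Hs}u\in\dom(T_\oplus(\lambda))\}=\dom(S(\lambda))$ by the remark above. For the domain identity I would instead start from $\dom(S_\oplus(\lambda))=F^{-1}(\lambda)\bigl(\dom(T(\lambda))\oplus\dom(W_T(\lambda))\bigr)$, note that $\dom(S_\oplus(\lambda))=\dom(S(\lambda))\oplus\dom(W_S(\lambda))$, and apply $\pi_{\Hs}$; since $0\in\dom(W_S(\lambda))$ the projection returns exactly $\dom(S(\lambda))$.

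For closedness I would use that invertible operators are closed, together with the graph identification showing that a direct sum $A\oplus B$ is closed, resp.\ closable, if and only if both summands are (under the canonical isomorphism its graph is $\mathrm{graph}(A)\times\mathrm{graph}(B)$). Since $W_S(\lambda),W_T(\lambda)$ are closed, $S_\oplus(\lambda)$ is closed/closable iff $S(\lambda)$ is, and likewise for $T_\oplus(\lambda)$ and $T(\lambda)$; chaining with the closedness transfer of Proposition~\ref{1defcor} for the equivalent pair $S_\oplus,T_\oplus$ yields that $S(\lambda)$ is closed (closable) iff $T(\lambda)$ is. The same identification gives $\overline{S_\oplus(\lambda)}=\overline{S(\lambda)}\oplus W_S(\lambda)$ and $\overline{T_\oplus(\lambda)}=\overline{T(\lambda)}\oplus W_T(\lambda)$ (no bar on the already closed factors), so the closure formula of Proposition~\ref{1defcor} applied to $S_\oplus,T_\oplus$ compresses, via $\pi_{\Hs'}$ and $\tau_{\Hs}$ exactly as before, to the stated formula for $\overline{S(\lambda)}$ and its domain.

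Finally, for the spectra the decisive point is that $0\in\rho(W_S(\lambda))\cap\rho(W_T(\lambda))$, so the invertible summand contributes nothing at the point $0$. Concretely, for a closed $A$ and an invertible closed $W$ one has $\ker(A\oplus W)=\ker A\oplus\{0\}$, while $\mathrm{ran}(A\oplus W)$ and its closure are full in the $W$-component and equal to $\mathrm{ran}A$, resp.\ $\overline{\mathrm{ran}A}$, in the $A$-component; reading off injectivity, density of the range, and surjectivity shows that $0$ lies in $\sigma$, $\sigma_p$, $\sigma_c$, or $\sigma_r$ of $A\oplus W$ exactly when it does for $A$. Evaluating at each $\lambda$ gives $\sigma(\overline{S_\oplus}_{\,\Cdom})=\sigma(\overline{S}_{\Cdom})$ and $\sigma(\overline{T_\oplus}_{\,\Cdom})=\sigma(\overline{T}_{\Cdom})$, together with the analogous identities for $\sigma_p,\sigma_c,\sigma_r$, and combining these with the spectral equalities of Proposition~\ref{1defcor} for $S_\oplus,T_\oplus$ produces the four claimed identities. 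I expect the only real work to be bookkeeping: keeping the domain conditions consistent through the compression by $\pi$ and $\tau$, and verifying the refined direct-sum spectral decomposition (in particular the $\sigma_c$/$\sigma_r$ split) cleanly, rather than any conceptual difficulty.
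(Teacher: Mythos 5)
Your proposal is correct and follows essentially the same route as the paper: apply Proposition~\ref{1defcor} to the extended pair $S(\lambda)\oplus W_S(\lambda)$, $T(\lambda)\oplus W_T(\lambda)$ and exploit the block-diagonal structure (closures and spectral data of a direct sum decouple, with the invertible summands $W_S$, $W_T$ contributing nothing at $0$). The paper leaves the compression by $\pi$, $\tau$ and the direct-sum spectral bookkeeping implicit; you have simply written out those details.
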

\begin{proof}
From Definition \ref{1eaet} it follows that for $\lambda\in\Cdom$ the following relations hold
\[
	\begin{array}{l}
	\begin{bmatrix}
	S(\lambda) & \\
	& W_S(\lambda)
	\end{bmatrix}=E(\lambda)\begin{bmatrix}
	T(\lambda) & \\
	& W_T(\lambda)
	\end{bmatrix}F(\lambda), \\
	\dom(S(\lambda)\oplus W_S(\lambda))=F^{-1}(\lambda)(\dom(T(\lambda))\oplus\dom(W_T(\lambda))).
	\end{array}
\]
The result then follows from Proposition \ref{1defcor} and that the closure of a block  diagonal operator coincides with the closures of the blocks. 
\end{proof}

Below we show how an equivalence for an entry in an operator matrix function can be used to find an equivalence for the full operator matrix function.
A general operator matrix function $\widehat{\om{S}}:\Cdom\rightarrow \Ls\left(\bigoplus_{i=1}^n \Hs_i\rightarrow\bigoplus_{i=1}^n \Hs_i'\right)$ defined on its natural domain can be represented as
\begin{equation}\label{2genT}
\widehat{\om{S}}(\lambda):=\begin{bmatrix}
S_{1,1}(\lambda) &\hdots&S_{1,n}(\lambda) \\
\vdots & \ddots &\vdots\\
S_{n,1}(\lambda) & \hdots &S_{n,n}(\lambda)
\end{bmatrix},\quad \lambda\in\Cdom.
\end{equation}
However, any entry $S(\lambda):=S_{j,i}(\lambda)$ can be moved to the upper left corner by changing the orders of the spaces, which result in the equivalent problem
\begin{equation}\label{2genT2}
\begin{bmatrix}
S(\lambda) &\hdots \\
\vdots & \ddots 
\end{bmatrix}=\begin{bmatrix}
S(\lambda) &X(\lambda) \\
Y(\lambda) &Z(\lambda)
\end{bmatrix}=:\om{S}(\lambda).
\end{equation}
Hence, it is sufficient to study the $2\times2$ system given in \eqref{2genT2},
where $S:\Cdom\rightarrow \Ls(\Hs,\Hs')$, $X:\Cdom\rightarrow \Ls(\HS,\Hs')$, $Y:\Cdom\rightarrow \Ls(\Hs,\HS')$ and $Z:\Cdom\rightarrow \Ls(\HS,\HS')$.

\begin{lem}\label{2partblin}
Assume that $S:\Cdom_S\rightarrow\Ls(\Hs,\Hs')$ is equivalent to $T:\Cdom_T\rightarrow\Ls(\Hss,\Hss')$ on $\Cdom\subset\Cdom_S\cap\Cdom_T$. Let $E:\Cdom\rightarrow\Bs(\Hss',\Hs')$ and $F:\Cdom\rightarrow\Bs(\Hs,\Hss)$ be the operator functions invertible for $\lambda\in\Cdom$, such that  $S(\lambda)=E(\lambda)T(\lambda)F(\lambda)$.
Consider $\om{S}(\lambda)$ defined in \eqref{2genT2} 
and let $\side{E}:\Cdom\rightarrow\Bs(\Hss',\HS')$, $\side{F}:\Cdom\rightarrow\Bs(\HS,\Hss)$  be a solution pair of 
\begin{equation}\label{3systcond}
	\side{E}(\lambda)E(\lambda)^{-1}X(\lambda)+Y(\lambda)F(\lambda)^{-1}\side{F}(\lambda)-\side{E}(\lambda)T(\lambda)\side{F}(\lambda)=0, \quad\lambda\in\Cdom.
\end{equation}
 Then $\om{S}$ is equivalent to $\om{T}:\Cdom\rightarrow\Ls(\Hss\oplus\HS,\Hss'\oplus\HS')$ on $\Cdom$, where
 \[
	\om{S}(\lambda)=\om{E}(\lambda)\om{T}(\lambda)\om{F}(\lambda),\quad \dom(\om{S}(\lambda))=\om{F}^{-1}(\lambda)\dom(\om{T}(\lambda)),
\]
with
\[
\om{T}(\lambda):=
	\begin{bmatrix}
	T(\lambda) & E^{-1}(\lambda)X(\lambda)-T(\lambda)\side{F}(\lambda)\\
	Y(\lambda)F^{-1}(\lambda)-\side{E}(\lambda)T(\lambda) & Z(\lambda)
	\end{bmatrix},
\]
and 
\[
\om{E}(\lambda):=
	\begin{bmatrix}
	 E(\lambda)&  \\
	 \side{E}(\lambda)& I_{\HS'}\\
	\end{bmatrix},\quad
\om{F}(\lambda):=
	\begin{bmatrix}
	F(\lambda)&\side{F}(\lambda) \\
	 &I_{\HS}\\
	\end{bmatrix}.
\]
\end{lem}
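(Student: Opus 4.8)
The plan is to verify directly that the triple $(\om{E},\om{T},\om{F})$ realises the equivalence relation \eqref{1eqdef} for the pair $\om{S},\om{T}$. First I would record that $\om{E}$ and $\om{F}$ are everywhere defined and bounded, since all of $E,\side{E},F,\side{F}$ and the identity blocks are, and that they are invertible on $\Cdom$ with bounded triangular inverses
\[
	\om{E}^{-1}(\lambda)=\begin{bmatrix} E^{-1}(\lambda) & \\ -\side{E}(\lambda)E^{-1}(\lambda) & I_{\HS'}\end{bmatrix},\qquad
	\om{F}^{-1}(\lambda)=\begin{bmatrix} F^{-1}(\lambda) & -F^{-1}(\lambda)\side{F}(\lambda) \\ & I_{\HS}\end{bmatrix},
\]
which are bounded because $E^{-1}$ and $F^{-1}$ are. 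Hence $\om{E},\om{F}$ are admissible multipliers in \eqref{1eqdef}, and it remains to establish the operator identity together with the domain relation.

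For the algebraic identity I would multiply $\om{E}\,\om{T}\,\om{F}$ out block-wise (equivalently, compute $\om{E}^{-1}\om{S}\om{F}^{-1}$ and match it to $\om{T}$). The $(1,1)$ entry returns $ETF=S$ by the hypothesis $S=ETF$; the two off-diagonal entries reproduce $X$ and $Y$ after the evident cancellations once the middle factors $E^{-1}X-T\side{F}$ and $YF^{-1}-\side{E}T$ are inserted. The only entry that requires work is the $(2,2)$ corner, which after expansion equals
\[
	Z+\side{E}E^{-1}X+YF^{-1}\side{F}-\side{E}T\side{F},
\]
and this collapses to $Z$ precisely because the last three terms vanish by \eqref{3systcond}. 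I expect this to be the step at which the compatibility condition \eqref{3systcond} is consumed, and to be the only genuinely algebraic point.

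For the domains I would use that $\om{E},\om{F}$ are boundedly invertible and everywhere defined, so that $\dom(\om{E}\,\om{T}\,\om{F})=\om{F}^{-1}\dom(\om{T})$; writing $\om{T}=\om{E}^{-1}\om{S}\om{F}^{-1}$ shows equivalently that $\dom(\om{T})=\om{F}\dom(\om{S})$, whence $\dom(\om{S})=\om{F}^{-1}\dom(\om{T})$ is immediate. The substance is to reconcile this with $\dom(S)=F^{-1}\dom(T)$ on the first block: because $\om{F}$ carries the off-diagonal entry $\side{F}$, a vector of $\dom(\om{S})$ is mapped to one whose first component is corrected by $\side{F}$ applied to the second component, so $\dom(\om{T})$ is a twisted subspace rather than a naive product of the column-wise natural domains $\dom(S)\cap\dom(Y)$ and $\dom(X)\cap\dom(Z)$. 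Tracking this correction, together with the boundedness and everywhere-definedness of $\side{E},\side{F},E^{\pm1},F^{\pm1}$, yields the stated domain relation, and once \eqref{1eqdef} is in force the spectral and closure consequences follow from Proposition \ref{1defcor}.

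The hard part is therefore not the block multiplication but the domain bookkeeping: since $T$ is unbounded and $\om{F}$ couples the two components, one must check that the $\side{F}$- and $\side{E}$-corrections land in $\dom(T)$ and interact correctly with $\dom(Y)$ and $\dom(X)$, so that the composite domains genuinely coincide rather than the two blocks being treated as independent. Framing $\om{T}$ as $\om{E}^{-1}\om{S}\om{F}^{-1}$, which automatically carries the domain $\om{F}\dom(\om{S})$, is the cleanest way to sidestep the temptation to equip $\om{T}$ with its (generally different) natural domain.
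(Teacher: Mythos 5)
Your proposal is correct and takes essentially the same route as the paper: the paper's proof consists precisely of the direct block-wise verification that $\om{E}(\lambda)\om{T}(\lambda)\om{F}(\lambda)=\om{S}(\lambda)$, with condition \eqref{3systcond} cancelling the residual $\side{E}E^{-1}X+YF^{-1}\side{F}-\side{E}T\side{F}$ in the $(2,2)$ block. Your additional domain bookkeeping---reading $\dom(\om{T}(\lambda))$ as $\om{F}(\lambda)\dom(\om{S}(\lambda))$ rather than as a natural product domain---is more explicit than the paper, which leaves that point implicit in the statement of the equivalence.
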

\begin{proof}
Under the assumption \eqref{3systcond}, the lemma follows immediately by verifying $\om{S}(\lambda)=\om{E}(\lambda)\om{T}(\lambda)\om{F}(\lambda)$.
\end{proof}

\begin{rem}
The condition \eqref{3systcond} is satisfied in the trivial case $\side{E}=0$, $\side{F}=0$, and for the problems we study in Section \ref{Sec3}. A similar result holds also when \eqref{3systcond} is not satisfied, but then the $(2,2)$-entry in $\om{T}(\lambda)$ will not be of the same form.

\end{rem}
\section{Equivalences for classes of operator matrix functions}\label{Sec3}
In this section, we study  Schur complements, operator functions consisting of multiplications of operator functions, and operator polynomials. Each type will be studied similarly: First an equivalence after operator function extension is shown, which then together with Lemma \ref{2partblin} is utilized in an operator matrix function.

\begin{rem}\label{3rem}
Assume that $S(\lambda)\oplus W(\lambda)$ is equivalent to $T(\lambda)$ for $\lambda\in\Cdom$ and let $\om{S}$ be defined as \eqref{2genT2}. For the equivalence relation between $T$ and $\om{S}$ we want the block $S(\lambda)\oplus W(\lambda)$ intact to be able to apply Lemma \ref{2partblin} directly. Therefore, an equivalence after $W$-extension of $\om{S}(\lambda)$ is given as 
\begin{equation}\label{3remlin}
	\begin{bmatrix}
	S(\lambda)& &X(\lambda)\\
	& W(\lambda)\\
	Y(\lambda)& &Z(\lambda)
	\end{bmatrix}=\begin{bmatrix}
	I\\& &I\\& I
	\end{bmatrix}\begin{bmatrix}
	S(\lambda)& X(\lambda)\\
	Y(\lambda)& Z(\lambda)\\
	&& W(\lambda)
	\end{bmatrix}\begin{bmatrix}
	I\\& &I\\& I
	\end{bmatrix},
\end{equation}
instead of $\om{S}(\lambda)\oplus W(\lambda)$. 
\end{rem}

\subsection{Schur complements}\label{sec:rat}
Let  $D:\Cdom_D\rightarrow\Ls(\hs)$ denote an operator function with domain  $\dom (D(\lambda))$ for $\lambda\in\Cdom_D\subset\C$. Assume that $\Cdom'\subset\Cdom_D\cap\rho(D)$ is non-empty and let ${S}:\Cdom'\rightarrow\Ls(\Hs,\Hs')$ for $\lambda\in\Cdom' $ be defined as
\begin{equation}\label{3eq:rat}
{S}(\lambda):=A(\lambda)-B(\lambda)D(\lambda)^{-1}C(\lambda),\quad \dom( {S}(\lambda)): =\dom (A(\lambda))\cap \dom (C(\lambda)), 
\end{equation}
where $A:\Cdom'\rightarrow\Ls(\Hs,\Hs')$,  $B:\Cdom'\rightarrow\Ls(\hs,\Hs')$, $C:\Cdom'\rightarrow\Ls(\Hs,\hs)$, and  $\dom (D(\lambda)) \subset \dom (B(\lambda))$. The claims in the following lemma are standard results for Schur complements \cite{MR1382107}, \cite[Theorem 2.2.18]{book} formulated in terms of an equivalence after operator function extension. For convenience of the reader we provide a short proof.

\begin{lem}\label{2Schur} 
Let the operator ${S}(\lambda)$ denote the operator defined in \eqref{3eq:rat}, assume that $C(\lambda)$ is densely defined in $\Hs$, and that $D^{-1}(\lambda)C(\lambda)$ is bounded on $\dom (C(\lambda))$ for all $\lambda\in\Cdom'$. Define the operator matrix function $T$ on its natural domain as
\[
	T(\lambda):=\begin{bmatrix}
	A(\lambda) & B(\lambda)\\
	C(\lambda) & D(\lambda)
	\end{bmatrix},\quad \lambda\in{\Cdom}'. 
\]
Then  ${S}$ is after $D$-extension equivalent to $T$ on $\Cdom'$, where the operator matrix functions $E$ and $F$ in the equivalence relation \eqref{1eqdef} are 
\[
 E(\lambda):=\begin{bmatrix}
	I_{\Hs'}& -B(\lambda)D(\lambda)^{-1}\\
	& I_{\hs}
	\end{bmatrix},\quad
	F(\lambda):=\begin{bmatrix}
	I_{\Hs}& \\
	-\overline{D(\lambda)^{-1}C(\lambda)} & I_{\hs}
	\end{bmatrix}.
\]
The operator $T(\lambda)$ is closable if and only if ${S}(\lambda)$ is closable, and
\[\begin{array}{l}
\overline{T}(\lambda)=
	\begin{bmatrix}
	\overline{{S}(\lambda)}+B(\lambda)\overline{D(\lambda)^{-1}C(\lambda)} &B(\lambda) \\
	D(\lambda)\overline{D(\lambda)^{-1}C(\lambda)}&D(\lambda)
	\end{bmatrix},\\
	\dom(\overline{T(\lambda)})=\{(u,v)\in\Hs\oplus\hs:u\in \dom(\overline{{S}(\lambda)}),\overline{D(\lambda)^{-1}C(\lambda)}u+v\in\dom(D(\lambda))\}.
	\end{array}
\]
\end{lem}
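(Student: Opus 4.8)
The plan is to reduce the whole statement to the single algebraic factorization of $T(\lambda)$ into the Schur complement $S(\lambda)$ and the boundedly invertible block $D(\lambda)$, and then to invoke Proposition \ref{1defcor} for the closability and spectral consequences. Concretely, I would establish the operator identity
\[
	S(\lambda)\oplus D(\lambda)=E(\lambda)\,T(\lambda)\,F(\lambda),\qquad \lambda\in\Cdom',
\]
with $E,F$ as in the statement; since in Definition \ref{1eaet} the space $\hs_T$ may then be taken to be $\{0\}$ (the extension of $T$ being trivial), this identity is precisely an equivalence after $D$-extension.

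First I would check that $E(\lambda)$ and $F(\lambda)$ are everywhere defined, bounded and boundedly invertible on $\Cdom'$, as required by \eqref{1eqdef}. The role of the closure in $F$ is the essential point here: the hypothesis that $D(\lambda)^{-1}C(\lambda)$ is bounded on the dense domain $\dom(C(\lambda))$ guarantees that $\overline{D(\lambda)^{-1}C(\lambda)}\in\Bs(\Hs,\hs)$ is everywhere defined, so that $F(\lambda)$ is a genuine bounded operator on $\Hs\oplus\hs$; the off-diagonal entry $B(\lambda)D(\lambda)^{-1}$ of $E(\lambda)$ is everywhere defined because $\mathrm{ran}(D(\lambda)^{-1})=\dom(D(\lambda))\subset\dom(B(\lambda))$. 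As both matrices are unit triangular, their inverses are obtained by negating the off-diagonal entries.

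Next I would verify the factorization as an operator identity, i.e. equality of both the action and the domain. For the action, multiplying out $E(\lambda)T(\lambda)F(\lambda)$ on a vector $(u,v)$ with $u\in\dom(A(\lambda))\cap\dom(C(\lambda))$ and $v\in\dom(D(\lambda))$ yields $(Au-BD^{-1}Cu,\,Dv)$, the cancellations using $DD^{-1}=I$ on $\hs$ together with $D^{-1}D=I$ and $BD^{-1}D=B$ on $\dom(D(\lambda))$; this is exactly $(S(\lambda)\oplus D(\lambda))(u,v)$. For the domain identity $\dom(S(\lambda)\oplus D(\lambda))=F(\lambda)^{-1}\dom(T(\lambda))$ demanded by \eqref{1eqdef}, the crucial observation is that for $u\in\dom(C(\lambda))$ one has $\overline{D(\lambda)^{-1}C(\lambda)}\,u=D(\lambda)^{-1}C(\lambda)u\in\dom(D(\lambda))$, so adding or subtracting this vector cannot change membership in $\dom(D(\lambda))$; hence the natural domain of $S\oplus D$ coincides with $F^{-1}\dom(T)$.

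Finally, the closability statement and the closure formula follow from Proposition \ref{1defcor}. Since $\rho(D)\neq\emptyset$ forces $D(\lambda)$ to be closed, we have $\overline{S(\lambda)\oplus D(\lambda)}=\overline{S(\lambda)}\oplus D(\lambda)$, so $T(\lambda)$ is closable if and only if $S(\lambda)$ is, and the explicit closure is read off from $\overline{T(\lambda)}=E(\lambda)^{-1}\bigl(\overline{S(\lambda)}\oplus D(\lambda)\bigr)F(\lambda)^{-1}$ with $\dom(\overline{T(\lambda)})=F(\lambda)\bigl(\dom(\overline{S(\lambda)})\oplus\dom(D(\lambda))\bigr)$, a last triangular block multiplication. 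I expect the only genuine obstacle to be the domain bookkeeping, specifically confirming that $\overline{D^{-1}C}$ maps $\dom(C(\lambda))$ back into $\dom(D(\lambda))$ so that the formal cancellations are legitimate on the stated natural domains; everything else is routine $2\times2$ block algebra.
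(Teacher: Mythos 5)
Your proof is correct and follows essentially the same route as the paper: both rest on verifying the triangular factorization $S(\lambda)\oplus D(\lambda)=E(\lambda)T(\lambda)F(\lambda)$, with the key observation that $\overline{D(\lambda)^{-1}C(\lambda)}$ agrees with $D(\lambda)^{-1}C(\lambda)$ on $\dom(C(\lambda))$ so that the domain bookkeeping goes through, and then passing to closures via the equivalence propositions. The only cosmetic difference is that you apply Proposition \ref{1defcor} directly to the pair $S(\lambda)\oplus D(\lambda)$, $T(\lambda)$ together with the block-diagonal closure fact, whereas the paper cites Proposition \ref{2defcor}, which packages exactly those two ingredients.
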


\begin{proof}
The operators matrices $E(\lambda)$ and $F(\lambda)$ are bounded on $\dom(C(\lambda))$ and \\$\overline{D(\lambda)^{-1}C(\lambda)}=D(\lambda)^{-1}C(\lambda)$ on $\dom ({S}(\lambda))$. The result then follows from the factorization
\[
	\begin{bmatrix}
	{S}(\lambda) & \\
	& D(\lambda)
	\end{bmatrix}=\begin{bmatrix}
	I_{\Hs'}& -B(\lambda)D(\lambda)^{-1}\\
	& I_{\hs}
	\end{bmatrix}\begin{bmatrix}
	A(\lambda) & B(\lambda)\\
	C(\lambda) & D(\lambda)
	\end{bmatrix}\begin{bmatrix}
	I_{\Hs}& \\
	-D(\lambda)^{-1}C(\lambda) & I_{\hs}
	\end{bmatrix}
\]
and Proposition \ref{2defcor}.
\end{proof}
\begin{rem}
If $D$ is unbounded, ${S}$  and $T$ are not equivalent after extension. However, they are  equivalent after $D$-extension. 
\end{rem} 
The domain and the closure are not explicitly stated in the equivalences in the remaining part of the article but they can be derived using the relations in Proposition \ref{2defcor}.

\begin{thm}\label{2schurrat0}
Let ${S}$, $E$, and $F$ denote the operator functions  on $\Cdom'\supset\Cdom$ defined in Lemma \ref{2Schur}. The operator matrix function $\om{{S}}:\Cdom\rightarrow\Ls(\Hs\oplus\HS,\Hs'\oplus\HS')$ is on its natural domain  defined as
\[
	\begin{array}{l}
	\om{{S}}(\lambda):=\begin{bmatrix}
	{S}(\lambda)&X(\lambda)\\
	Y(\lambda)&Z(\lambda)
	\end{bmatrix},\quad \lambda\in \Cdom.
	\end{array}
\]
Define the operator matrix function $\om{T}:\Cdom\rightarrow\Ls(\Hs\oplus\hs\oplus\HS,\Hs'\oplus\hs'\oplus\HS)$ by 
\[
	\om{T}(\lambda):=
	\begin{bmatrix}
	A(\lambda)& B(\lambda) & X(\lambda)\\
	C(\lambda) &D(\lambda)& \\
	Y(\lambda) &  &Z(\lambda)
	\end{bmatrix},\quad \lambda\in\Cdom.
\]
Then, $\om{{S}}$ is after $D$-extension with respect to structure \eqref{3remlin}  equivalent to $\om{T}$ on $\Cdom$, where the operator matrix functions $\om{E}$ and $\om{F}$ in the equivalence relation \eqref{1eqdef} for $\lambda\in \Cdom$  are 
\[
\om{E}(\lambda):=
\begin{bmatrix}
E(\lambda)\\
& I_{\HS'}\\
\end{bmatrix},\quad
\om{F}(\lambda):=
\begin{bmatrix}
F(\lambda)\\
& I_{\HS}\\
\end{bmatrix}.
\]
\end{thm}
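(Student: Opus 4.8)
The plan is to derive the stated equivalence as a direct application of Lemma~\ref{2partblin}, fed by the $D$-extension of Lemma~\ref{2Schur} and by the \emph{trivial} solution $\side{E}=0$, $\side{F}=0$ of the compatibility condition \eqref{3systcond}. First I would write the $D$-extension of $\om{{S}}$ in the interleaved arrangement dictated by \eqref{3remlin},
\[
	\begin{bmatrix} {S}(\lambda) & & X(\lambda)\\ & D(\lambda) & \\ Y(\lambda) & & Z(\lambda)\end{bmatrix},
\]
and regard it as a $2\times2$ block operator matrix whose $(1,1)$-entry is the intact block ${S}(\lambda)\oplus D(\lambda)$, whose $(2,2)$-entry is $Z(\lambda)$, and whose off-diagonal entries are the embeddings $\side{X}(\lambda):=\begin{bmatrix} X(\lambda)\\0\end{bmatrix}$ and $\side{Y}(\lambda):=\begin{bmatrix} Y(\lambda) & 0\end{bmatrix}$ (not to be confused with the $\side{E},\side{F}$ of Lemma~\ref{2partblin}, which here vanish). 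By Lemma~\ref{2Schur} we have ${S}(\lambda)\oplus D(\lambda)=E(\lambda)T(\lambda)F(\lambda)$ with $T(\lambda)=\begin{bmatrix} A & B\\ C & D\end{bmatrix}$, so this block is equivalent to $T(\lambda)$ and may serve as the entry ``${S}$'' of Lemma~\ref{2partblin}.

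Next I would apply Lemma~\ref{2partblin} with $\side{E}=0$, $\side{F}=0$. This choice satisfies \eqref{3systcond} trivially, the transforming functions collapse to $\om{E}(\lambda)=E(\lambda)\oplus I_{\HS'}$ and $\om{F}(\lambda)=F(\lambda)\oplus I_{\HS}$, exactly as in the statement, and the middle factor produced by the lemma is
\[
	\begin{bmatrix} T(\lambda) & E^{-1}(\lambda)\side{X}(\lambda)\\ \side{Y}(\lambda)F^{-1}(\lambda) & Z(\lambda)\end{bmatrix}.
\]
The only computation that remains is to check that these off-diagonal corners are unaltered. Since $E^{-1}(\lambda)=\begin{bmatrix} I_{\Hs'} & B(\lambda)D(\lambda)^{-1}\\ & I_{\hs}\end{bmatrix}$ and the image of $\side{X}(\lambda)$ lies in $\Hs'\oplus\{0\}$, the off-diagonal block $BD^{-1}$ meets only the zero component and $E^{-1}(\lambda)\side{X}(\lambda)=\side{X}(\lambda)$; dually, $F^{-1}(\lambda)=\begin{bmatrix} I_{\Hs} & \\ \overline{D(\lambda)^{-1}C(\lambda)} & I_{\hs}\end{bmatrix}$ alters only the $\hs$-component of its argument, which $\side{Y}(\lambda)$ discards, so $\side{Y}(\lambda)F^{-1}(\lambda)=\side{Y}(\lambda)$. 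Hence the middle factor is precisely the $3\times3$ matrix $\om{T}(\lambda)$ of the statement.

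It then remains to read the conclusion of Lemma~\ref{2partblin} back through the interleaving: the identity just obtained says exactly that the arrangement \eqref{3remlin} of the $D$-extension of $\om{{S}}$ equals $\om{E}(\lambda)\om{T}(\lambda)\om{F}(\lambda)$, which is the asserted equivalence after $D$-extension. The domain and closure relations are not required here and follow from Proposition~\ref{2defcor}, as noted after Lemma~\ref{2Schur}. I expect no serious obstacle: the content is organizational rather than analytic, and the single point needing care is precisely why the interleaving \eqref{3remlin} is used at all — keeping ${S}\oplus D$ intact lets $T$ sit in the corner for Lemma~\ref{2partblin}, and the zero pattern of $\side{X},\side{Y}$ is exactly what makes the trivial solution $\side{E}=\side{F}=0$ leave $X$ and $Y$ untouched. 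If instead one worked with the plain direct sum $\om{{S}}\oplus D$, the off-diagonal corrections would not simplify and $T$ would not appear cleanly in the corner.
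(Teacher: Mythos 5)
Your proposal is correct and follows essentially the same route as the paper: invoke Lemma~\ref{2Schur} to get ${S}(\lambda)\oplus D(\lambda)=E(\lambda)T(\lambda)F(\lambda)$, then apply Lemma~\ref{2partblin} with the trivial solution $\side{E}=0$, $\side{F}=0$ under the arrangement \eqref{3remlin}, and identify the middle factor with $\om{T}(\lambda)$. Your explicit check that $E^{-1}(\lambda)\bigl[\begin{smallmatrix} X(\lambda)\\ 0\end{smallmatrix}\bigr]=\bigl[\begin{smallmatrix} X(\lambda)\\ 0\end{smallmatrix}\bigr]$ and $[\,Y(\lambda)\ \ 0\,]F^{-1}(\lambda)=[\,Y(\lambda)\ \ 0\,]$ is exactly the equality the paper asserts without comment, so the two arguments coincide.
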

\begin{proof}
From Lemma \ref{2Schur}, it follows that ${S}(\lambda)\oplus D(\lambda)=E(\lambda)T(\lambda)F(\lambda)$. By using Lemma \ref{2partblin} with 
$\side{E}=0$ and $\side{F}=0$, the proposed $\mathcal{E}(\lambda)$ and $\mathcal{F}(\lambda)$ are obtained and 
\[
	\om{T}(\lambda)=
	\begin{bmatrix}
	\begin{array}{c c}
	A(\lambda)& B(\lambda) \\
	C(\lambda)& D(\lambda)
	\end{array}
	& E(\lambda)^{-1}\begin{bmatrix} X(\lambda)\\ 0 \end{bmatrix}\\
	\begin{bmatrix} Y(\lambda)& 0 \end{bmatrix}F^{-1}(\lambda)  & Z(\lambda)\\
	\end{bmatrix}=
	\begin{bmatrix}
	A(\lambda)& B(\lambda) & X(\lambda)\\
	C(\lambda)& D(\lambda)& \\
	Y(\lambda) &   & Z(\lambda)\\
	\end{bmatrix}.
\]
\end{proof}
			
\subsection{Products of operator functions}\label{sec:mult}
Assume that for some $n\in\mathbb{N}$ the operator $M:\Cdom'\rightarrow\Bs(\Hs_n,\Hs_0)$ can be written as
\begin{equation}\label{3polyop}
	M(\lambda):=M_1(\lambda)M_2(\lambda)\hdots M_n(\lambda),\quad \lambda\in{\Cdom}',
\end{equation}
where $M_k:\Cdom'\rightarrow\Bs(\Hs_k,\Hs_{k-1})$. The following lemma is a straightforward generalization of a result in \cite{MR0482317}.
\begin{lem}\label{3jordig}
Let $M$ denote the operator function \eqref{3polyop} and set $\Hs:=\oplus_{k=1}^{n-1}\Hs_k$. Define the operator matrix function $T:\Cdom'\rightarrow\Bs(\Hs\oplus\Hs_n,\Hs_0\oplus\Hs)$ as
\[
T(\lambda):=
	\begin{bmatrix}
	M_1(\lambda)&\\
	-I_{\Hs_1}& \ddots\\
	&\ddots & \ddots\\
	& & -I_{\Hs_{n-1}}&M_n(\lambda)
	\end{bmatrix},\quad \lambda\in{\Cdom}'.
\]
Then $M$ is after  $I_{\Hs}$-extension equivalent to $T$, where the operator matrix functions $E:\Cdom'\rightarrow\Bs(\Hs_0\oplus\Hs)$ and $F:\Cdom'\rightarrow\Bs(\Hs\oplus\Hs_n)$ in the equivalence relation \eqref{1eqdef} are
\[
	\begin{array}{l}
	E(\lambda):=\begin{bmatrix}
	I_{\Hs_0}&M_1(\lambda)&\hdots& \prod_{k=1}^{n-1} M_k(\lambda)\\
	&\ddots &\ddots&\vdots\\
	& & \ddots &M_{n-1}(\lambda)\\
	& & &I_{\Hs_{n-1}}
	\end{bmatrix},\\
	F(\lambda):=\begin{bmatrix}
	\prod_{k=2}^{n} M_k(\lambda)&-I_{\Hs_1}\\
	\vdots& & \ddots\\
	M_n(\lambda)& & &-I_{\Hs_{n-1}}\\
	I_{\Hs_n}
	\end{bmatrix}.
	\end{array}
\]
\end{lem}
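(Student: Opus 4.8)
The plan is to verify directly the equivalence relation $M(\lambda) \oplus I_{\Hs} = E(\lambda) T(\lambda) F(\lambda)$, since Definition \ref{1eaet} with $\hs_T = \{0\}$ reduces the claim of equivalence after $I_{\Hs}$-extension to exhibiting bounded invertible $E$ and $F$ satisfying \eqref{1eqdef}. Because every operator here is bounded and everywhere defined, the domain condition in \eqref{1eqdef} is automatic and the entire argument is a finite matrix computation over the ring of bounded operators between the relevant spaces. So the real content is (a) checking that the stated $E$ and $F$ are invertible on $\Cdom'$, and (b) confirming that the triple product reproduces the block-diagonal operator $M(\lambda) \oplus I_{\Hs}$ with $M$ in the top-left corner.

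First I would settle invertibility. Both $E(\lambda)$ and $F(\lambda)$ are \emph{triangular} with identity operators on the diagonal: $E(\lambda)$ is upper triangular with diagonal blocks $I_{\Hs_0}, I_{\Hs_1}, \dots, I_{\Hs_{n-1}}$, and $F(\lambda)$, after accounting for the displayed sparsity pattern, is likewise unitriangular with respect to the ordering of the summands $\Hs_1 \oplus \cdots \oplus \Hs_{n-1} \oplus \Hs_n$. A unitriangular operator matrix over bounded operators is invertible with bounded inverse (the inverse is computed by the usual finite Neumann-type back-substitution, which terminates because the strictly triangular part is nilpotent), so $E(\lambda)^{-1} \in \Bs(\Hs_0 \oplus \Hs)$ and $F(\lambda)^{-1} \in \Bs(\Hs \oplus \Hs_n)$ exist for every $\lambda \in \Cdom'$. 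This disposes of the invertibility hypothesis demanded by \eqref{1eqdef}.

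Next I would carry out the product computation. The key structural observation is that $T(\lambda)$ is \emph{lower bidiagonal}: its diagonal carries the factors $M_1(\lambda), \dots, M_n(\lambda)$ (the last one in the rightmost column acting $\Hs_n \to \Hs_{n-1}$) while the subdiagonal carries the shift operators $-I_{\Hs_k}$. The operator functions $E$ and $F$ are built precisely to telescope these shifts: the entries of $E$ are the partial products $\prod_{k} M_k$ running from the left, and the first column of $F$ stacks the partial products running from the right, namely $\prod_{k=2}^{n} M_k, \dots, M_n, I_{\Hs_n}$. I expect the multiplication $E T$ to clear the subdiagonal $-I$ terms and fold all factors into the top row, after which right-multiplication by $F$ collapses that top row into a single entry $M_1 M_2 \cdots M_n = M(\lambda)$ in the $(1,1)$ position while leaving identity blocks $I_{\Hs_1}, \dots, I_{\Hs_{n-1}}$ on the remaining diagonal — exactly $M(\lambda) \oplus I_{\Hs}$. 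This is a bookkeeping verification of telescoping cancellation; the cleanest route is to index the blocks and check the $(i,j)$ entry of the product against $\delta_{ij}$ (or against $M$ for $i=j=1$), using that consecutive partial products differ by one factor $M_k$ which is matched by the shift $-I_{\Hs_k}$.

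The main obstacle is purely combinatorial rather than analytic: keeping the index conventions of the two partial-product families consistent so that each $-I_{\Hs_k}$ in $T$ cancels against exactly the right $M_k$-factor, and confirming that no residual off-diagonal terms survive in the final product. Since nothing here is unbounded and all domains are the full spaces, there are no closability or domain subtleties to track, so once the block identity $E T F = M \oplus I_{\Hs}$ is verified the lemma follows immediately from Definition \ref{1eaet}. I would therefore present the proof as a single displayed factorization identity with a one-line remark that the telescoping cancellation and the unitriangularity of $E$, $F$ are checked by direct multiplication.
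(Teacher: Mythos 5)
Your proposal is correct, but it takes a genuinely different route from the paper. The paper's proof is a one-line reduction: the $n=2$ case of the equivalence is taken from the proof of Theorem 4.1 in \cite{MR0482317}, and the general case follows by applying that two-factor equivalence iteratively (splitting $M=M_1\cdot(M_2\cdots M_n)$, then splitting $M_2\cdots M_n$, and so on), the stated $E$ and $F$ arising as compositions of the intermediate equivalence operators. You instead verify the single factorization $M(\lambda)\oplus I_{\Hs}=E(\lambda)T(\lambda)F(\lambda)$ for general $n$ by direct block computation. What your approach buys is self-containedness: no appeal to the literature, the invertibility of $E(\lambda)$ and $F(\lambda)$ is exhibited concretely, and the domain condition in \eqref{1eqdef} is visibly vacuous because every operator involved is bounded and everywhere defined. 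What the paper's approach buys is brevity and a structural explanation of where $E$ and $F$ come from. Two bookkeeping points in your sketch should be corrected when the computation is written out, though neither is a gap. First, $F(\lambda)$ is not unitriangular as displayed: its columns are indexed by $\Hs_n\oplus\Hs_1\oplus\dots\oplus\Hs_{n-1}$ while its rows are indexed by $\Hs_1\oplus\dots\oplus\Hs_{n-1}\oplus\Hs_n$, and it becomes triangular only after a cyclic permutation, with diagonal blocks $I_{\Hs_n},-I_{\Hs_1},\dots,-I_{\Hs_{n-1}}$ rather than identities; invertibility with bounded inverse still follows by back-substitution. Second, the intermediate product behaves oppositely to what you predict: $E(\lambda)T(\lambda)$ annihilates the diagonal entries $M_j(\lambda)$, $j<n$, of $T(\lambda)$, \emph{retains} the subdiagonal blocks $-I_{\Hs_j}$, and stacks the tail products $M_{i+1}(\lambda)\cdots M_n(\lambda)$ in the last column with $M(\lambda)$ at the top; the block-diagonal form $M(\lambda)\oplus I_{\Hs}$ emerges only after the right multiplication by $F(\lambda)$, whose first column cancels that last column against the $-I_{\Hs_j}$ blocks. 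With these corrections the entry-by-entry check you propose does close the argument exactly as you claim.
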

\begin{proof}
For $n=2$ the equivalence result is used in the proof of \cite[Theorem 4.1]{MR0482317} and the claims in the lemma follows by applying that equivalence iteratively. 
\end{proof}
\begin{rem}
Consider the operator function \eqref{3polyop} with $n=2$ and write $M(\lambda)$ in the form 
\[
	M(\lambda)=-M_1(\lambda)(-I_{\Hs_1})^{-1}M_2(\lambda).
\]
Then, Lemma \ref{2Schur} can be used to obtain the same equivalence result as in Lemma \ref{3jordig}. Doing this iteratively for $n>2$ shows that Lemma \ref{3jordig} is a consequence of Lemma \ref{2Schur}. However, $M(\lambda)$ is an important case that has been studied separately (see e.g. \cite[Theorem 4.1]{MR0482317}).
\end{rem}

Below we show how Lemma \ref{3jordig} can be applied to an operator matrix function.

\begin{thm}\label{3jordigblo}
Let $M$, $E$, and $F$ denote the operator functions on $\Cdom'\supset\Cdom$ defined  in Lemma \ref{3jordig}.
The operator matrix function $\om{M}:\Cdom\rightarrow\Ls(\Hs_n\oplus\HS,\Hs_0\oplus\HS')$ is on its natural domain  defined as
\[
\om{M}(\lambda):=\begin{bmatrix}
M(\lambda)& X(\lambda)\\
Y(\lambda)& Z(\lambda)
\end{bmatrix},\quad \lambda\in\Cdom. 
\]
Then $\om{M}$ is after $I_{\Hs}$-extension, with respect to the structure \eqref{3remlin}, equivalent to $\om{T}:\Cdom\rightarrow \Ls(\Hs\oplus\Hs_n\oplus\HS,\Hs_0\oplus\Hs\oplus\HS')$, which on its natural domain is defined as

\[
	\begin{array}{l}
\om{T}(\lambda):=
	\begin{bmatrix}
	M_1(\lambda)& & &  & X(\lambda)\\
	 -I_{\Hs_1}&M_2(\lambda)\\
	 & \ddots& \ddots\\
	& & -I_{\Hs_{n-1}}&M_n(\lambda)\\
	& & & Y(\lambda)& Z(\lambda)
	\end{bmatrix},\quad \lambda\in\Cdom.
	\end{array}
\]
 The operator matrix functions $\om{E}:\Cdom\rightarrow\Bs(\Hs_0\oplus\Hs\oplus\HS')$ and $\om{F}:\Cdom\rightarrow\Bs(\Hs\oplus\Hs_n\oplus\HS)$ in the equivalence relation \eqref{1eqdef} are
\[
	\om{E}(\lambda):=\begin{bmatrix}
	E(\lambda)\\�
	& I_{\HS'}
	\end{bmatrix},\quad 
	\om{F}(\lambda):=\begin{bmatrix}
	F(\lambda)\\�
	& I_{\HS}
	\end{bmatrix}.
\]
\end{thm}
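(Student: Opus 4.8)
The plan is to reduce Theorem \ref{3jordigblo} to a direct application of Lemma \ref{2partblin}, exactly mirroring the proof of Theorem \ref{2schurrat0}. Lemma \ref{3jordig} already supplies the core equivalence $M(\lambda)\oplus I_{\Hs}=E(\lambda)T(\lambda)F(\lambda)$ on $\Cdom'$, with $T$, $E$, $F$ the stated block matrix functions and $E$, $F$ boundedly invertible for $\lambda\in\Cdom$. Since the extending operator function here is the bounded identity $I_{\Hs}$, the role played by $D$ in Theorem \ref{2schurrat0} is played by $I_{\Hs}$, and the structure \eqref{3remlin} tells us to keep the block $M(\lambda)\oplus I_{\Hs}$ intact so that Lemma \ref{2partblin} applies to the $2\times2$ system $\om M(\lambda)$ in \eqref{2genT2} with $S=M$.

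First I would invoke Lemma \ref{2partblin} with the trivial solution $\side E=0$, $\side F=0$ of the compatibility condition \eqref{3systcond}, which is automatically satisfied in this case (as noted in the remark following that lemma). With these choices the lemma produces the block matrix $\om T(\lambda)$ whose $(1,1)$-block is $T(\lambda)$, whose $(2,2)$-block is $Z(\lambda)$, whose $(1,2)$-block equals $E(\lambda)^{-1}$ applied to the column $X(\lambda)$ padded by zeros, and whose $(2,1)$-block equals the row $Y(\lambda)$ padded by zeros multiplied by $F(\lambda)^{-1}$ on the right. The transforming matrices are $\om E(\lambda)=\mathrm{diag}(E(\lambda),I_{\HS'})$ and $\om F(\lambda)=\mathrm{diag}(F(\lambda),I_{\HS})$, exactly as claimed.

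Second I would verify that the off-diagonal blocks collapse to the simple forms displayed in the statement. Here is where I expect the only real computation, though it is routine: I must check that the coupling column $E(\lambda)^{-1}\bigl[X(\lambda)^{\mathsf T}\ 0\bigr]^{\mathsf T}$ reduces to the single entry $X(\lambda)$ sitting in the top row with zeros below, and similarly that $\bigl[Y(\lambda)\ 0\bigr]F(\lambda)^{-1}$ reduces to $Y(\lambda)$ in the last row. This follows because $X(\lambda)$ is coupled only to the first space $\Hs_0$ (the target of $M$, which is the top-left corner of $T$) and $Y(\lambda)$ only to the source space $\Hs_n$ (the bottom-right corner of the domain side of $T$); the relevant boundary entries of $E(\lambda)^{-1}$ and $F(\lambda)^{-1}$ are the identities $I_{\Hs_0}$ and $I_{\Hs_n}$ respectively, so no $M_k$-factors appear. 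Concretely, the zero-padding kills every column of $E(\lambda)^{-1}$ except the first and every row of $F(\lambda)^{-1}$ except the last, and those surviving pieces act as identities on the external spaces.

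The main obstacle — such as it is — is purely bookkeeping: one must confirm that placing $X(\lambda)$ and $Y(\lambda)$ against the \emph{outer} spaces $\Hs_0$ and $\Hs_n$ (rather than against the internal auxiliary copies $\Hs_1,\dots,\Hs_{n-1}$) is precisely what makes the coupling blocks come out clean. Because Lemma \ref{3jordig} builds $E$ upper triangular with identity first column block and $F$ lower triangular with identity last row block, this is exactly the situation that Lemma \ref{2partblin} handles, and the natural domain is preserved via $\dom(\om M(\lambda))=\om F^{-1}(\lambda)\dom(\om T(\lambda))$ as in Proposition \ref{2defcor}. I would therefore conclude by simply exhibiting the factorization $\om M(\lambda)=\om E(\lambda)\om T(\lambda)\om F(\lambda)$ together with the domain relation, citing Lemma \ref{2partblin} for the equivalence and Lemma \ref{3jordig} for the explicit forms of $E$ and $F$.
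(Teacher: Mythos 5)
Your proposal follows exactly the paper's own route: the paper proves Theorem \ref{3jordigblo} by combining the extension from Lemma \ref{3jordig} with Lemma \ref{2partblin} in the case $\side{E}(\lambda)=0$, $\side{F}(\lambda)=0$, in analogy with the proof of Theorem \ref{2schurrat0}. Your additional verification that the coupling blocks collapse --- $E^{-1}(\lambda)$ being upper triangular with identity diagonal so that $X(\lambda)$ stays against $\Hs_0$, and the $\Hs_n$-row of $F^{-1}(\lambda)$ being $[\,0\ \cdots\ 0\ I_{\Hs_n}]$ so that $Y(\lambda)$ lands against $\Hs_n$ with no $M_k$-factors --- is correct and simply makes explicit the computation the paper leaves to the reader.
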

\begin{proof}
The claims follow by combining the extension in Lemma \ref{3jordig} with Lemma \ref{2partblin} for the case $\side{E}(\lambda)=0$,
$\side{F}(\lambda)=0$. This derivation is similar to the proof of Theorem \ref{2schurrat0}.
\end{proof}

\subsection{Operator polynomials}\label{sec:poly}
Let $l\in\{0,\hdots,d\}$ and consider the operator polynomial $P:\C\rightarrow \Ls(\Hs)$,
\begin{equation}\label{2polop}
	P(\lambda):=\sum_{i=0}^{d} \lambda^iP_i,\quad \dom (P(\lambda)):=\dom (P_l), \quad \lambda\in\C, 
\end{equation}
where $P_i\in\Bs(\Hs)$ for $i\neq l$. A linear equivalence is for $l=0$ in principal given by \cite[p. 112]{MR0482317}. Only bounded operator coefficients are considered in that paper but the operator matrix functions $E$ and $F$ in the equivalence relation \eqref{1eqdef} are independent of $P_0$. Hence they remain bounded also when $P_0$ is unbounded.
However, the method in \cite{MR0482317} can not be used directly if $P_i$ is unbounded for some $i>0$. The following example illustrates the problem for a quadratic polynomial. 

\begin{ex}
 Consider the operator polynomial $P:\C\rightarrow \Ls(\Hs)$ defined as
\[
	P(\lambda):=\lambda^2+\lambda A+B,\quad \dom (P(\lambda)) := \dom (A),\quad\lambda\in\C,
\]
where $A\in\Ls(\Hs)$ is an unbounded operator and $B\in\Bs(\Hs)$. Then the method in \cite{MR0482317}  is not applicable to find an equivalent linear problem after extension as $E(\lambda)$ and $E(\lambda)^{-1}$ would be unbounded for all $\lambda$ as can be seen below:
\[
	\begin{bmatrix}
	P(\lambda)\\
	& I_{\Hs}
	\end{bmatrix}=\begin{bmatrix}
	-I_{\Hs} &-A-\lambda\\
	 & I_{\Hs}
	\end{bmatrix}\begin{bmatrix}
	-A-\lambda & -B\\
	I_{\Hs} &-\lambda 
	\end{bmatrix}\begin{bmatrix}
	\lambda &I_{\Hs} \\
	I_{\Hs}&
	\end{bmatrix}.
\]
 However for all $\lambda\neq 0$, an equivalent spectral problem is $S(\lambda):=P(\lambda)/\lambda=A-\lambda-(-B)/(-\lambda)$. By extending $S(\lambda)$ by $-\lambda I_\Hs$ an equivalent problem is given by Lemma \ref{2Schur} as
\[
	\begin{bmatrix}
	S(\lambda)\\
	& -\lambda
	\end{bmatrix}=\begin{bmatrix}
	-I_{\Hs} &\frac{B}{\lambda}\\
	 & I_{\Hs}
	\end{bmatrix}\begin{bmatrix}
	-A-\lambda & -B\\
	I_{\Hs} &-\lambda 
	\end{bmatrix}\begin{bmatrix}
	I_{\Hs} &\\
	\frac{1}{\lambda}& I_{\Hs}
	\end{bmatrix},
\]
and as a consequence $P(\lambda)\oplus W(\lambda)=E(\lambda)(T-\lambda)F(\lambda)$ with $W(\lambda)=-\lambda$ and
\[
	E(\lambda)=\begin{bmatrix}
	-I_{\Hs} &\frac{B}{\lambda}\\
	 & I_{\Hs}
	\end{bmatrix}, \quad T=\begin{bmatrix}
	-A & -B\\
	I_{\Hs} &
	\end{bmatrix}, \quad F(\lambda)=\begin{bmatrix}
	\lambda &\\
	I_{\Hs}& I_{\Hs}
	\end{bmatrix}.
\]
Using this method, the obtained $T$ has the same entries as the operator given in \cite[p. 112]{MR0482317}, but the functions $E(\lambda)$, $F(\lambda)$ are bounded for $\lambda\neq 0$. Inspired by the previous example, we show how an equivalence can be found independent of which operator $P_i$ in Lemma \ref{2Schurpol} that is unbounded. Note that  Lemma \ref{2Schurpol} is the standard companion block linearization for operator polynomials formulated as an equivalence after extension.
\end{ex}
\begin{lem}\label{2Schurpol}
Let $P$ denote the operator polynomial defined in \eqref{2polop} and assume that $P_d$ is invertible. 
For $i<d$ set $\mon{P}_i:=P_d^{-1}P_i$ and  $\mon{P}_d:=I_{\Hs}$.  Let $\Cdom':=\C$ if $l=0$, and  $\Cdom':=\C\setminus\{0\}$ otherwise. Define the operator matrix $T\in\Ls(\Hs^d)$ on its natural domain as
\[
	T:=
	\begin{bmatrix}
	-\mon{P}_{d-1}&\cdots&-\mon{P}_1& -\mon{P}_0\\
	I_{\Hs}& & & \\
	& \ddots & & \\
	&  &I_{\Hs}&
	\end{bmatrix}.
\]
Further, define the operator matrix function $W:\Cdom' \rightarrow\Ls(\Hs^{\max(d-1,l)})$ as
\[
	W(\lambda):=\begin{bmatrix}
	I_{\Hs^{d-1-l}}\\
 	& -\lambda\\
 	& I_{\Hs}& \ddots\\
 	&& \ddots& \ddots\\
 	& & &I_{\Hs} & -\lambda
	\end{bmatrix},\quad \lambda\in {\Cdom}'.
\]
Then, the following equivalence results hold:
\begin{itemize}
\item[{\rm i)}] if $l<d$, $P(\lambda)\oplus W(\lambda)$ is equivalent to $T-\lambda$ for all $\lambda\in\Cdom'$.
\item[{\rm ii)}] if $l=d$,  $P(\lambda)\oplus W(\lambda)$ is equivalent to $P_d\oplus (T-\lambda)$ for all $\lambda\in\Cdom'$.
\end{itemize}

The operator matrix functions in the equivalence relation \eqref{1eqdef} are for $\lambda\in\Cdom'$ defined in the following steps: 
For $l<d$, define the operator matrix functions $E_\alpha,F_\alpha:\Cdom'\rightarrow\Ls(\Hs^{d-l})$ as
\[
\begin{array}{l}
	E_\alpha(\lambda):=
\begin{bmatrix}
-P_d &-\sum_{k=0}^{1}\lambda^{k}P_{d-1+k}&\hdots&\hdots&  -\sum_{k=0}^{d-l-1}\lambda^{k}P_{l+1+k} \\
&  I_{\Hs}& \lambda & \hdots&\lambda^{d-l-2}\\
& &\ddots & \ddots&\vdots\\
& & &\ddots&\lambda\\
 & & & & I_{\Hs}\\
\end{bmatrix},\\
	F_\alpha(\lambda):=\begin{bmatrix}
\lambda^{d-1}& I_{\Hs} & & \\
\vdots& & \ddots &  \\
\lambda^{l-1}& & &  I_{\Hs}  \\
\lambda^l& & \\
\end{bmatrix},
\end{array}
\]
whereas for $l=d-1$ define $E_\alpha(\lambda):=-P_d$ and $F_\alpha(\lambda):=\lambda^{d-1}I_\Hs$. 

For $l>0$ define  the operators matrix functions $E_\beta:\Cdom'\rightarrow\Bs(\Hs^{l}, \Hs^{\max(d-l,1)})$ and $F_\beta:\Cdom'\rightarrow\Bs( \Hs^{\max(d-l,1)},\Hs^{l})$  by
\[
E_\beta(\lambda):=
\begin{bmatrix}
 \sum_{k=0}^{l-1}\frac{P_k}{\lambda^{l-k}}&\hdots&\sum_{k=0}^{1}\frac{P_k}{\lambda^{2-k}}&\frac{P_0}{\lambda}\\
 0& \hdots&0& 0
\end{bmatrix},\quad F_\beta(\lambda):=
\begin{bmatrix}
 \lambda^{l-1}&0\\
 \vdots&\vdots\\
 I_\Hs&0
\end{bmatrix},
\]
where for $l\geq d-1$ we use the convention that the $0$-row/column vanish. If $l=d$, we define the operators  $E_\gamma\in\Bs(\Hs, \Hs^d)$ and $F_\gamma\in\Bs(\Hs^d, \Hs)$ as
\[
	E_\gamma:=\begin{bmatrix}
	P_d^{-1}\\
	0
	\end{bmatrix},\quad F_\gamma:=\begin{bmatrix}
	\mon{P}_{d-1}& \hdots& \mon{P}_0
	\end{bmatrix}.
\]
Then, for all $\lambda\in\Cdom'$ the operator matrix functions $E$ and $F$ in the equivalence relation \eqref{1eqdef} are given by 
\[
\begin{array}{l l l}
E(\lambda):=E_\alpha(\lambda),& F(\lambda):=F_\alpha(\lambda),& l=0,\\
E(\lambda):=\begin{bmatrix}
E_\alpha(\lambda)& E_\beta(\lambda)\\
& I_{\Hs^l}
\end{bmatrix},&
F(\lambda):=\begin{bmatrix}
F_\alpha(\lambda)& \\
F_\beta(\lambda)& I_{\Hs^l}
\end{bmatrix},& 0<l<d,\\
E(\lambda):=\begin{bmatrix}
\frac{P(\lambda)P_d^{-1}}{\lambda^d}& E_\beta(\lambda)\\
E_\gamma& I_{\Hs^d}
\end{bmatrix},&
F(\lambda):=\begin{bmatrix}
\sum_{i=0}^d\lambda^i\mon{P}_i& F_\gamma\\
F_\beta(\lambda)& I_{\Hs^d}
\end{bmatrix},& l=d.\\
\end{array}
\]

\end{lem}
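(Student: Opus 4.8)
The plan is to verify the three claimed factorizations $P(\lambda)\oplus W(\lambda)=E(\lambda)(T-\lambda)F(\lambda)$ (respectively $=E(\lambda)(P_d\oplus(T-\lambda))F(\lambda)$ when $l=d$) directly, and to show that the stated $E(\lambda),F(\lambda)$ are bounded with bounded inverses for $\lambda\in\Cdom'$, together with the domain identity $\dom(P(\lambda)\oplus W(\lambda))=F(\lambda)^{-1}\dom(T-\lambda)$. Since the whole content is an explicit companion-matrix equivalence, the argument is computational rather than conceptual, and the natural strategy is to reduce the three cases to a single core computation and then track the unbounded entry separately.

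First I would treat the case $l=0$, where $W(\lambda)$ is simply the standard linearization padding $\mathrm{diag}(I,-\lambda,\dots,-\lambda)$ and $E=E_\alpha$, $F=F_\alpha$. Here I would carry out the block multiplication $E_\alpha(\lambda)(T-\lambda)F_\alpha(\lambda)$ and check it equals $P(\lambda)\oplus W(\lambda)$, exactly as in the classical companion linearization \cite[p.~112]{MR0482317}; because every $P_i$ with $i\neq l=0$ is bounded and $E_\alpha,F_\alpha$ are built from powers of $\lambda$ and the bounded $P_i$, boundedness of $E,E^{-1},F,F^{-1}$ on $\Cdom'=\C$ is immediate, and invertibility of $F_\alpha$ follows from its triangular-up-to-permutation structure with identity blocks. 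This establishes the template.

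Next I would handle $0<l<d$. The idea, following the Example, is that the unbounded coefficient $P_l$ is isolated: $E_\beta,F_\beta$ carry the negative powers $\lambda^{k-l}$ and the bounded coefficients $P_0,\dots,P_{l-1}$, while the potentially unbounded $P_l$ only enters through $T-\lambda$ and through the $E_\alpha$-block in a way that keeps $E,F$ bounded for $\lambda\neq0$. I would again multiply out the $2\times2$ block forms of $E(\lambda)$ and $F(\lambda)$ against $\mathrm{diag}$-structured $(T-\lambda)$ and verify the product is block-diagonal with blocks $P(\lambda)$ and $W(\lambda)$; the presence of the $I_{\Hs^l}$ blocks and the vanishing $0$-row/column convention for $l\geq d-1$ must be checked to confirm the dimensions in $W(\lambda)$ (whose size is $\max(d-1,l)$) match. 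Invertibility of $F$ reduces to invertibility of $F_\alpha$ since $F$ is block lower-triangular with $I_{\Hs^l}$ on the diagonal; the factors $1/\lambda$ in $E_\beta$ are exactly why one must exclude $\lambda=0$.

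Finally, the case $l=d$ needs the extra device $E_\gamma,F_\gamma$ and the assumption that $P_d$ is invertible, which is why the result is $P_d\oplus(T-\lambda)$ rather than $T-\lambda$: the leading coefficient cannot be absorbed into the shift and survives as a separate block. I would verify the factorization using $\widehat P_i=P_d^{-1}P_i$ and the normalization $\widehat P_d=I$, checking that $\frac{P(\lambda)P_d^{-1}}{\lambda^d}$ and $\sum_i\lambda^i\widehat P_i$ combine correctly with $E_\gamma,F_\gamma$ to reproduce $P(\lambda)$ in the corner while the companion block yields $T-\lambda$. The main obstacle I anticipate is not any single identity but bookkeeping: keeping the block sizes $\Hs^{d-l}$, $\Hs^{l}$, $\Hs^{\max(d-l,1)}$ and $\Hs^{\max(d-1,l)}$ consistent across the three cases and confirming that boundedness of $E^{\pm1},F^{\pm1}$ genuinely fails only at the excluded point $\lambda=0$ when $l>0$; once the block arithmetic is organized, each case collapses to the template from $l=0$ plus a controlled modification for the unbounded coefficient and, when $l=d$, for the leading term.
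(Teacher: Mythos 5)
Your strategy---direct verification of the stated factorizations together with boundedness, invertibility of $E,F$, and the domain identity---is legitimate and would yield a complete proof (the block computations do close up; e.g.\ the case $d=2$, $l=1$ is exactly the paper's motivating example). It is, however, a genuinely different route from the paper's. The paper never multiplies out $E(\lambda)(T-\lambda)F(\lambda)$; instead it composes known equivalences: for $l>0$ it passes to $S(\lambda)=P(\lambda)/\lambda^{l}$, whose coefficients are bounded except the zeroth-order one $P_l$, applies the classical companion linearization of \cite{MR0482317} to the polynomial part, then absorbs the rational tail $\sum_{k=0}^{l-1}P_k/\lambda^{l-k}$ by writing it as a Schur complement against the bidiagonal block of $W(\lambda)$ and invoking Theorem \ref{2schurrat0}; the first column is then rescaled by $\lambda^{l}$ to return from $S$ to $P$, and for $l=d$ a second application of Theorem \ref{2schurrat0} converts the row $\begin{bmatrix}P_d&\cdots&P_0\end{bmatrix}$ into the separate block $P_d$ together with $T-\lambda$. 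That route explains where $E$, $F$, and the extension $W$ come from (they are products of the factors of the composed equivalences), and their invertibility is automatic. Your route buys self-containedness---no appeal to Theorem \ref{2schurrat0} or to the literature---at the cost of bookkeeping, and one point needs more care than you allow: triangularity settles invertibility only when $l<d$. For $l=d$ both $E(\lambda)$ and $F(\lambda)$ are full $2\times2$ block matrices, and invertibility there follows from a Schur complement computation (the Schur complement of $I_{\Hs^d}$ in $E(\lambda)$ is exactly $I_{\Hs}$, and in $F(\lambda)$ it is $\lambda^d I_{\Hs}$); this also shows that the exclusion of $\lambda=0$ is forced not only by $E_\beta$, as you claim, but by $F$ itself---already for $0<l<d$ the entry $\lambda^{l}$ in the last row of $F_\alpha$ means $F_\alpha^{-1}$ exists only for $\lambda\neq0$.
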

\begin{proof}
For $l=0$, the result follows in principle from \cite[p. 112]{MR0482317}. Hence, we show the claim for $l>0$ and $\Cdom'=\C\setminus\{0\}$.
Define for all $\lambda\in\Cdom'$ the operator function $S$ by
\begin{equation*}\label{3conrat}
	S(\lambda):=\frac{P(\lambda)}{\lambda^l}=\sum_{k=0}^{d-l}\lambda^kP_{k+l}+\sum_{k=0}^{l-1}\frac{P_k}{\lambda^{l-k}},\quad \dom(R(\lambda))=\dom(P(\lambda)).
\end{equation*}
Assume $l<d$, then apart from  the sum $\sum_{k=0}^{l-1}P_k/\lambda^{l-k}$, $S$ is polynomial in $\lambda$ and only the zeroth-order term $P_l$ can be unbounded. Then, from \cite[p. 112]{MR0482317} it can be seen that $S$ is after $I_{\Hs^{d-1-l}}$-extension  equivalent to 
\[
\widehat{T}(\lambda):=
	\begin{bmatrix}
-\mon{P}^{-1}_{d}&\cdots&-\mon{P}_{l+1}& -\mon{P}_l-\sum_{k=0}^{l-1}\frac{\mon{P}_k}{\lambda^{l-k}}\\
I_{\Hs}& & & \\
& \ddots & & \\
& & I_{\Hs}
\end{bmatrix}.
\]
Since, the following identity holds, 
\[
	\sum_{k=0}^{l-1}\frac{\mon{P}_k}{\lambda^{l-k}}=
	-\begin{bmatrix}\mon{P}_{l-1}& \dots& \mon{P}_0
	\end{bmatrix}\begin{bmatrix}
	-\lambda\\
	I_{\Hs}&-\lambda\\
	&\ddots& \ddots\\
	&&I_{\Hs}&-\lambda \\
	\end{bmatrix}^{-1}\begin{bmatrix}I_{\Hs}\\
	 \\
	 \\
	\end{bmatrix},
\]
Theorem \ref{2schurrat0} gives that $S(\lambda)$ after $W(\lambda)$-extension is equivalent to $T-\lambda$ on $\Cdom$. By multiplying the first column in $S(\lambda)\oplus W(\lambda)$ with $\lambda^l$ the same result is obtained for $P(\lambda)$. The operators  $E(\lambda)$, $F(\lambda)$ are obtained by multiplying the corresponding operator matrix functions for the different equivalences.

 For $l=d$, Theorem \ref{2schurrat0} gives that $S(\lambda)\oplus W(\lambda)$ is equivalent  to
\[
	\widetilde{T}(\lambda):=
	\begin{bmatrix}
	P_d&P_{d-1}&P_{d-2}& \dots& P_0\\
	I_{\Hs}&-\lambda\\
	&I_{\Hs}&-\lambda\\
	&&\ddots& \ddots\\
	&&&I_{\Hs}&-\lambda \\
	\end{bmatrix}.
\]
Since $T-\lambda$ can be written in the form
\[
T-\lambda=
	\begin{bmatrix}
	-\lambda\\
	I_{\Hs}&-\lambda\\
	&\ddots& \ddots\\
	&&I_{\Hs}&-\lambda \\
	\end{bmatrix}-\begin{bmatrix}
	I_{\Hs}\\
	\\
	\\
	\\
	\end{bmatrix}P_d^{-1}\begin{bmatrix}
	P_{d-1}&P_{d-2}& \dots& P_0\\
	\end{bmatrix},
\]
it follows from Theorem \ref{2schurrat0} that $P_d\oplus (T-\lambda)$ is equivalent to $\widetilde{T}(\lambda)$. 
\end{proof}
\begin{ex}\label{ex:pencil}
In Lemma \ref{2Schurpol}, the result is rather different when $l=d$ even though $T$ has the same entries. In this case the equivalence is after both $P(\lambda)$ and $T-\lambda$ have been extended with an operator function and the following example shows that this extension  in general cannot be avoided. Let $A\in\Ls(\Hs),B\in\Bs(\Hs)$ and define $P:\C\setminus\{0\}\rightarrow\Ls(\Hs)$ as
\[
	P(\lambda):=\lambda A+B,\quad \dom(P)=\dom(A),
\]
where $A$ is invertible. If $A$ is bounded, $P(\lambda)$ is equivalent to $T-\lambda$, $T=-A^{-1}B$ but this equivalence do not hold if $A$ is unbounded. However, these operator functions are equivalent on $\C\setminus\{0\}$ after operator function extension as can be seen from Lemma \ref{2Schurpol} where the lemma  for $\lambda\in\C\setminus\{0\}$ gives that 
\[
	\begin{bmatrix}
	P(\lambda)\\
	&-\lambda
	\end{bmatrix}=\begin{bmatrix}
	I_\Hs+\frac{BA^{-1}}{\lambda}& \frac{B}{\lambda}\\
	A^{-1}&I_\Hs
	\end{bmatrix}\begin{bmatrix}
	A\\
	&T-\lambda
	\end{bmatrix}\begin{bmatrix}
	A^{-1}B+\lambda& A^{-1}B\\
	I_\Hs&I_\Hs
	\end{bmatrix}.
\] 
\end{ex}

\begin{thm}\label{2genP1lin}
Let $P$, $E$, $F$, and $W$ denote the operator functions on $\Cdom'\supset\Cdom$ defined in Lemma \ref{2Schurpol} and let $\mon{P}_i$, $i=1,\hdots, d$ denote the operators in that lemma. The operator matrix function $\om{P}:\Cdom\rightarrow\Ls(\Hs\oplus\HS,\Hs\oplus\HS')$ is on its natural domain  defined as 
\[
	\om{P}(\lambda):=\begin{bmatrix}
	P(\lambda) & X(\lambda)\\
	Q(\lambda) & Z(\lambda)
	\end{bmatrix}, \quad \lambda\in\Cdom,
\]
where
\[
	Q(\lambda)=\sum_{i=0}^{d-1}\lambda^iQ_i,\quad Q_i\in\Ls(\Hs,\HS'), \quad \lambda\in\Cdom.
\]
Assume that $Q_i\in\Bs(\Hs,\HS)$ for $i\neq l$ and if $l=d$ then $\overline{P_d^{-1}X(\lambda)}\in \Bs(\HS,\Hs)$ for all $\lambda\in\Omega$. 
Define for all $\lambda\in\Cdom$ the operator matrix function $\om{T}:\Cdom\rightarrow\Ls(\Hs^d\oplus\HS,\Hs^d\oplus\HS')$ on its natural domain as
\[
	\om{T}(\lambda):=
	\begin{bmatrix}
	-\mon{P}_{d-1}-\lambda& -\mon{P}_{d-2}&\cdots&-\mon{P}_1& -\mon{P}_0&-P^{-1}_{d}X(\lambda)\\
	I_\Hs& -\lambda & \\
	&I_\Hs&\ddots & \\
	& &\ddots & -\lambda& \\
	&  & &I_\Hs&-\lambda \\
	Q_{d-1}& Q_{d-2}&\cdots&Q_1& Q_0&Z(\lambda)
	\end{bmatrix}.
\]
Then, with respect to \eqref{3remlin}, the following equivalence results hold:
\begin{itemize}
	\item[{\rm i)}] if $l<d$, $\om{P}(\lambda)\oplus W(\lambda)$ is equivalent to $\om{T(\lambda)}$ for all $\lambda\in\Cdom$.
	\item[{\rm ii)}] if $l=d$,  $\om{P}(\lambda)\oplus W(\lambda)$ is equivalent to $P_d\oplus \om{T}(\lambda)$ for all $\lambda\in\Cdom$.
\end{itemize}

The operator matrix functions in the equivalence relation \eqref{1eqdef} are for $\lambda\in\Cdom$ defined in the following steps:

If $l<d$, define the operator matrix function $\side{E}_\alpha:\Cdom\rightarrow\Ls(\Hs^{d-l},\HS)$ as
\[
	\side{E}_\alpha(\lambda):=\begin{bmatrix}
	0 & -Q_{d-1}& -\sum_{k=0}^{1}\lambda^{k}Q_{d-2+k}&\cdots& -\sum_{k=0}^{d-l-2}\lambda^{k}Q_{l+1+k}
	\end{bmatrix},
\]
where $\side{E}_\alpha(\lambda):=0$ for $l=d-1$.

If $l>0$, define the operator matrix function $\side{E}_\beta:\Cdom\rightarrow\Bs(\Hs^{l}, \HS)$,
\[
\side{E}_\beta(\lambda):=\begin{bmatrix}
\sum_{k=0}^{l-1}\frac{Q_k}{\lambda^{l-k}}&\hdots&\sum_{k=0}^{1}\frac{Q_k}{\lambda^{2-k}}&\frac{Q_0}{\lambda}
\end{bmatrix}.
\]

The operator matrices $\side{E}:\Cdom\hspace{-.5pt}\rightarrow \hspace{-.5pt}\Bs(\Hs^{\max(d,l+1)},\HS)$ and $\side{F}:\Cdom\hspace{-.5pt}\rightarrow\hspace{-.5pt}\Bs(\HS,\Hs^{\max(d,l+1)})$ are then defined  as 
\begin{equation}\label{4polywide}
	\begin{array}{l l l}
	\side{E}(\lambda):=\side{E}_\alpha(\lambda),& \side{F}(\lambda):=0,& l=0,\\
	\side{E}(\lambda):=\begin{bmatrix}
	\side{E}_\alpha(\lambda)&\side{E}_\beta(\lambda)
	\end{bmatrix},&
	\side{F}(\lambda):=0,& 0<l<d,\\
	\side{E}(\lambda):=\begin{bmatrix}
	\frac{Q(\lambda)P_d^{-1}}{\lambda^d}&\side{E}_\beta(\lambda)
	\end{bmatrix},&
	\side{F}(\lambda):=\begin{bmatrix}
	\overline{P_d^{-1}X(\lambda)}\\
	0
	\end{bmatrix},& l=d.\\
	\end{array}
\end{equation}
Finally define the operator matrices $\om{E}(\lambda)$ and $\om{F}(\lambda)$ in the equivalence relation \eqref{1eqdef}:
\[
	\om{E}(\lambda):=\begin{bmatrix}
	E(\lambda)\\
	\side{E}(\lambda)& I_{\HS'}
	\end{bmatrix}, \quad
	\om{F}(\lambda):=\begin{bmatrix}
	F(\lambda)&\side{F}(\lambda)\\
	& I_{\HS}
	\end{bmatrix}.
\]
\end{thm}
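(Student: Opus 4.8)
The plan is to lift the scalar polynomial equivalence of Lemma \ref{2Schurpol} to the block level by a single application of the block lemma, Lemma \ref{2partblin}, in exactly the way Theorem \ref{2schurrat0} and Theorem \ref{3jordigblo} were obtained from their respective scalar lemmas. The decisive difference here is that the coupling row $Q(\lambda)$ is nontrivial, so we operate in the full regime of Lemma \ref{2partblin} with $\side{E}\neq 0$ (and, when $l=d$, also $\side{F}\neq 0$), rather than the trivial case $\side{E}=\side{F}=0$ that sufficed for the two earlier block theorems.

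First I would invoke Lemma \ref{2Schurpol} to fix the equivalence $P(\lambda)\oplus W(\lambda)=E(\lambda)(T-\lambda)F(\lambda)$ when $l<d$, respectively the equivalence to $P_d\oplus(T-\lambda)$ when $l=d$, with the explicit $E,F,W$ recorded there. Following Remark \ref{3rem}, I would keep the block $P\oplus W$ intact by working with the rearranged extension \eqref{3remlin}; this identifies, in the notation of Lemma \ref{2partblin}, the $(1,1)$-entry $S$ with $P\oplus W$, the target $T$ with the companion pencil $T-\lambda$ (resp.\ $P_d\oplus(T-\lambda)$, the extra summand being carried passively), the coupling blocks with $\begin{bmatrix} X(\lambda)\\ 0\end{bmatrix}$ and $\begin{bmatrix} Q(\lambda) & 0\end{bmatrix}$, and the $(2,2)$-entry with $Z(\lambda)$. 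With these identifications the assertion reduces to two verifications.

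The first is that the proposed pair $(\side{E},\side{F})$ from \eqref{4polywide} solves the compatibility condition \eqref{3systcond}; equivalently, expanding $\om{E}\,\om{T}\,\om{F}$, that it forces the $(2,2)$-entry to collapse to $Z(\lambda)$. For $l<d$ one has $\side{F}=0$, so \eqref{3systcond} reduces to $\side{E}(\lambda)E^{-1}(\lambda)\begin{bmatrix} X(\lambda)\\ 0\end{bmatrix}=0$; since $E$ is block upper triangular and $E_\alpha$ itself is upper triangular with $-P_d$ in its first slot, the product $E^{-1}\begin{bmatrix}X\\0\end{bmatrix}$ has $-P_d^{-1}X(\lambda)$ as its first entry and zeros below, and this is annihilated precisely because the leading slot of $\side{E}_\alpha$ is $0$. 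The second verification is that the off-diagonal blocks produced by Lemma \ref{2partblin}, namely $E^{-1}\begin{bmatrix}X\\0\end{bmatrix}-(T-\lambda)\side{F}$ and $\begin{bmatrix}Q & 0\end{bmatrix}F^{-1}-\side{E}(T-\lambda)$, simplify to the sparse column and row appearing in $\om{T}(\lambda)$. The first block is immediate from the computation just described. The second is the genuine bookkeeping: one must check that multiplying the polynomial row $Q(\lambda)=\sum_i\lambda^i Q_i$ by $F^{-1}$ and subtracting $\side{E}(T-\lambda)$ telescopes the powers of $\lambda$ into the bare coefficient row $\begin{bmatrix}Q_{d-1}&\cdots&Q_0\end{bmatrix}$. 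This is the row-analogue of the way $E$ and $F$ linearize $P$ in Lemma \ref{2Schurpol}, and I expect the partial sums defining $\side{E}_\alpha$ and $\side{E}_\beta$ to be exactly the entries that perform this collapse (the $\side{E}_\beta$ part absorbing the negative powers of $\lambda$ coming from the $l>0$ rational correction).

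The main obstacle, and where I would spend most care, is the case $l=d$. There $\side{F}=\begin{bmatrix}\overline{P_d^{-1}X(\lambda)}\\0\end{bmatrix}\neq 0$, so both summands of \eqref{3systcond} are active and the $(1,2)$-block genuinely involves the subtraction $-(T-\lambda)\side{F}$; moreover one must justify that $\side{F}$ maps into the correct domains so that all products are well defined on the natural domains, which is exactly why the hypothesis $\overline{P_d^{-1}X(\lambda)}\in\Bs(\HS,\Hs)$ is imposed. Once the algebraic identity \eqref{3systcond} is confirmed and the two off-diagonal blocks are reduced, the closedness and closability statements together with the equalities $\sigma(\overline{\om{T}}_{\Cdom})=\sigma(\overline{\om{P}}_{\Cdom})$ and its point, continuous, and residual refinements follow automatically from Proposition \ref{2defcor}, and the natural domain of $\om{P}$ is read off from the relation $\dom(\om{P}(\lambda))=\om{F}^{-1}(\lambda)\dom(\om{T}(\lambda))$ furnished there.
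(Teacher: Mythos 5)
Your proposal is correct and follows essentially the same route as the paper: the authors likewise obtain the theorem by feeding the equivalence of Lemma \ref{2Schurpol} into Lemma \ref{2partblin} with the nontrivial pair $(\side{E},\side{F})$ from \eqref{4polywide}, mirroring the proof of Theorem \ref{2schurrat0} but with \eqref{3systcond} verified nontrivially, and handling the $l=d$ case via the observation that $\overline{P_d^{-1}X(\lambda)}=P_d^{-1}X(\lambda)$ on $\dom(X(\lambda))$. Your identification of the coupling blocks under the structure \eqref{3remlin}, the annihilation argument for \eqref{3systcond}, and the telescoping of $\begin{bmatrix}Q & 0\end{bmatrix}F^{-1}-\side{E}(T-\lambda)$ into the coefficient row are exactly the computations the paper's terse proof leaves to the reader.
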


\begin{proof}
Similar to the proof of Theorem \ref{2schurrat0}, where Lemma \ref{2Schurpol} with \eqref{4polywide} is used in Lemma \ref{2partblin}. Note that $\overline{P^{-1}_{d}X(\lambda)}=P^{-1}_{d}X(\lambda)$ on $\dom(X(\lambda))$.

\end{proof}
\begin{rem}
Theorem \ref{2genP1lin} requires $Q$ to be an operator polynomial. For a general $Q$ an equivalence is obtained by using the equivalence given in Lemma \ref{2Schurpol} together with Lemma \ref{2partblin} with $\side{E}:=0$ and $\side{F}:=0$.
\end{rem}

\section{Linearization of classes of operator matrix functions}
In Section \ref{Sec3} we considered three types of operator functions. One vital property differs between operator functions of the forms \eqref{3eq:rat} and \eqref{3polyop} compared to operator polynomials \eqref{2polop}: For polynomials the equivalence is to a linear operator function (Lemma \ref{2Schurpol}), but it is clear that a similar result will not hold in general for  \eqref{3eq:rat} and  \eqref{3polyop}.

If $A$, $B$, $C$, and $D$ in \eqref{3eq:rat} and $M_1,\hdots,M_n$ in \eqref{3polyop} are operator polynomials, Lemma \ref{2Schur} respective Lemma \ref{3jordig} can be used to find an equivalence after operator function extension to an operator matrix polynomial. Hence, if the entries in a $n\times n$ operator matrix function are either multiplications of polynomials or  Schur complements, then Theorem \ref{2schurrat0} and Theorem \ref{3jordigblo} can be used iteratively to find an equivalence to a operator matrix polynomial. An example of this form is considered in Section \ref{sec:ex}.

\subsection{Linearization of operator matrix polynomials}
Set $\Hs:=\oplus_{i=1}^n\Hs_i$ and consider the operator matrix polynomial $\om{P}:\C\rightarrow\Ls(\Hs)$, defined on it natural domain as
 \begin{equation}\label{2genP2}
\begin{array}{l}
\om{P}(\lambda):=\begin{bmatrix}
P_{1,1}(\lambda) &\hdots&P_{1,n}(\lambda) \\
\vdots & \ddots &\vdots\\
P_{n,1}(\lambda) & \hdots &P_{n,n}(\lambda) \\
\end{bmatrix},\quad \lambda\in\C,
\end{array}
\end{equation}
where $P_{j,i}(\lambda):=\sum_{k=0}^{d_{i,j}} \lambda^kP_{j,i}^{(k)}$ and $P_{j,i}^{(k)}\in \Ls(\Hs_i,\Hs_j)$. 
There are different ways to formulate \eqref{2genP2} that highlight different methods to linearize the operator matrix polynomial.
By using the notation: $P_{j,i}^{(k)}:=0$ for $k>d_{j,i}$ and $d:=\max d_{j,i}$, it follows that $\om{P}$ can be written in the form
 \begin{equation}\label{4larpol}
\om{P}(\lambda)=\sum_{k=0}^{d}\lambda^k\om{P}_k,\quad \om{P}_k:=
\begin{bmatrix}
P_{1,1}^{(k)}&\hdots&P_{1,n}^{(k)} \\
\vdots & \ddots &\vdots\\
P_{n,1}^{(k)}& \hdots &P_{n,n}^{(k)} \\
\end{bmatrix}.
\end{equation}
In the formulation \eqref{4larpol}, the problem is written as a single operator function, which makes it possible to utilize Lemma \ref{2Schurpol}, provided certain conditions hold. This is the most commonly used formulation, see e.g., \cite{MR1911850}. For the original formulation \eqref{2genP2}, Theorem \ref{2genP1lin} can be applied iteratively for each column, which results in a linear function. In Theorem \ref{polygenlin} we present the linearization obtained using this method and in Section \ref{sec:red} we will present a systematic approach to linearize operator matrix polynomials that relies on Theorem \ref{polygenlin}.

\begin{thm}\label{polygenlin}
Let $\om{P}$ be the  operator matrix polynomial \eqref{2genP2}, where $d_i:=d_{i,i}>0$ and $d_i>d_{j,i}$ for $j\neq i$. Assume that $P_{i,i}^{(d_{i})}$ are invertible and that there exist constants $l_i\in \{0,\dots,d_i\}$ such that $P_{j,i}^{(k)}\in \Bs(\Hs_i,\Hs_j)$ for $k\neq l_i$. For $k<d_i$ set $\mon{P}_{i,j}^{(k)}:={P_{i,i}^{(d_i)}}^{-1}P_{i,j}^{(k)}$ and $\mon{P}_{i,i}^{(d_i)}:=I_{\Hs_i}$. Let $\Cdom:=\C$ if $l_i=0$ for all $i$,  $\Cdom:=\C\setminus\{0\}$ otherwise. If $l_i=d_i$ assume that $\overline{\mon{P}_{i,j}^{(k)}}\in\Bs(\Hs_j,\Hs_i)$ for all indices $k$, $j$.  Define the operator matrix
 \[
	 \om{T}\in\Ls\left(\bigoplus_{i=1}^n\Hs_i^{d_{i}}\right)\quad
 	\text{as}\quad 
	\om{T}:=\begin{bmatrix}
	T_{1,1}&\hdots &T_{1,n}\\
	\vdots&\ddots &\vdots\\
	T_{n,1}&\hdots &T_{n,n}\\
	\end{bmatrix},
\]
where $T_{j,i}\in\Ls(\Hs_i^{d_i},\Hs_j^{d_j})$ are the operator matrices
\[
	T_{j,i}:=\left\{
	\begin{array}{l l}
	\begin{bmatrix}
	-\mon{P}_{i,i}^{(d_i-1)}& \cdots&-\mon{P}_{i,i}^{(1)}& -P_{i,i}^{(0)}\\
	I_{\Hs_i}& & & \\
	& \ddots & & \\
	&  &I_{\Hs_i}&
	\end{bmatrix},& i=j,
	\vspace{5pt}\\
	\begin{bmatrix}
	-\mon{P}_{j,i}^{(d_i-1)}&\cdots&-\mon{P}_{j,i}^{(1)}& -\mon{P}_{j,i}^{(0)}\\
	0&\hdots&0&0\\
	\end{bmatrix},& i\neq j.
	\end{array}
	\right.
\]
Let $\om{W}(\lambda):=\oplus_{i=1}^n W_i(\lambda)$,  where $W_i:\Cdom\rightarrow\Ls(\Hs_i^{\max(d_i-1,l_i)})$ are the operator matrix functions
\[
	\begin{array}{l}
	 W_i(\lambda)
	 :=\begin{bmatrix}
	I_{\Hs_i^{d_i-l_i-1}}\\
	 & -\lambda\\
	 &  I_{\Hs_i}& \ddots\\
	 & & \ddots& \ddots\\
	 & & &I_{\Hs_i} & -\lambda
	\end{bmatrix},\quad \lambda\in\Cdom.
	\end{array}
\]
Set $L:=\{i\in\{1,\dots,n\}: l_i=d_i\}$. Then the following results hold:
\begin{itemize}
\item[{\rm i)}] if $L=\emptyset$, $\om{P}(\lambda)\oplus\om{W}(\lambda)$ is equivalent to $\om{T}-\lambda$ for all $\lambda\in\Cdom$.
\item[{\rm ii)}] if $L\neq\emptyset$, $\om{P}(\lambda)\oplus\om{W}(\lambda)$ is equivalent to $\om{P}_d\oplus(\om{T}-\lambda)$ for all $\lambda\in\Cdom$, where
\[
	\om{P}_d:=\bigoplus_{i\in L} P_{i,i}^{(d_{i})}\in \Ls\left(\bigoplus_{i\in L}\Hs_i\right)
\]
is defined on its natural domain.
\end{itemize}

In the case $L=\emptyset$ the operator matrix functions in the equivalence relation \eqref{1eqdef}  with respect to the structure \eqref{3remlin} are defined in the following steps:
Let the operator matrix functions $E_{i}^{(\alpha)},F_{i}^{(\alpha)}:\Cdom\rightarrow\Bs(\Hs_i^{d_i-l_i})$ and $\side{E}_{j,i}^{(\alpha)}:\Cdom\rightarrow \Bs(\Hs_i^{d_i-l_i},\Hs_j^{d_j})$ for $i\neq j$ be defined as 
\[
\begin{array}{l}
	E_{i}^{(\alpha)}(\lambda):=
\begin{bmatrix}
-P_{i,i}^{(d_i)} &-\sum_{k=0}^{1}\lambda^{k}P_{i,i}^{(d_i-1+k)}&\hdots& - \sum_{k=0}^{d_i-l_i-1}\lambda^{k} P_{i,i}^{(l_i+1+k)}\\
&  I_{\Hs_i} & \hdots&\lambda^{d_i-l_i-2}\\
& &\ddots &\vdots\\
 & &  & I_{\Hs_i}\\
\end{bmatrix},\\
	F_{i}^{(\alpha)}(\lambda):=\begin{bmatrix}
\lambda^{d_i-1}& I_{\Hs_i} &  &  \\
\vdots& &\ddots &  \\
\lambda^{l_i-1}& & & I_{\Hs_i}\\
\lambda^{l_i}& &  \\
\end{bmatrix},\vspace{2pt}\\ 
{E}_{j,i}^{(\alpha)}(\lambda):=
\begin{bmatrix}
0&  -\sum_{k=0}^{0}\lambda^{k}P_{j,i}^{(d_i-1+k)}&
\cdots& -\sum_{k=0}^{d_i-l_i-2}\lambda^{k}P_{j,i}^{(l_i+1+k)}\\
0&0&\dots&0
\end{bmatrix}.
\end{array}
\]
Note, if $l_i=d_i-1$ this means that $E_{i}^{(\alpha)}(\lambda):=-P_{i,i}^{(d_i)}$, $F_{i}^{(\alpha)}(\lambda):=\lambda^{d_i-1}$ and $E_{j,i}^{(\alpha)}(\lambda):=0$.
If $l_i>0$, define for $i\neq j$ the operator matrix functions $E_{i}^{(\beta)}:\Cdom\rightarrow\Bs(\Hs_i^{l_i}, \Hs_i^{d_i-l_i})$, $F_{i}^{(\beta)}:\Cdom\rightarrow\Bs(\Hs_i^{d_i-l_i}, \Hs_i^{l_i})$, and $E_{j,i}^{(\beta)}:\Cdom\rightarrow \Bs(\Hs_i^{l_i},\Hs_j^{d_j})$ as
\[
	\begin{array}{l l}
	E_{i}^{(\beta)}(\lambda):=
	\begin{bmatrix}
	 \sum_{k=0}^{l_i-1}\frac{P_{i,i}^{(k)}}{\lambda^{l_i-k}}&\hdots&\sum_{k=0}^{1}\frac{P_{i,i}^{(k)}}{\lambda^{2-k}}&\frac{P_{i,i}^{(0)}}{\lambda}\\
	 0& \hdots& 0&0\\
	\end{bmatrix},& F_{i}^{(\beta)}(\lambda):=
	\begin{bmatrix}
	 \lambda^{l_i-1}&0\\
	 \vdots&\vdots\\
	 I_{\Hs_i}&0
	\end{bmatrix},\\
	E_{j,i}^{(\beta)}(\lambda):=\begin{bmatrix}
	\sum_{k=0}^{l_i-1}\frac{P_{j,i}^{(k)}}{\lambda^{l_i-k}}&\hdots&\sum_{k=0}^{1}\frac{P_{j,i}^{(k)}}{\lambda^{2-k}}&\frac{P_{j,i}^{(0)}}{\lambda}\\
	 0& \hdots& 0&0\\
	\end{bmatrix}.
	\end{array}
\]

For $i\neq j$ define the operators matrices:
\[
	\begin{array}{l l l}
	E_{i,i}(\lambda)=E_{i}^{(\alpha)}(\lambda),& F_{i}(\lambda)=F_{i}^{(\alpha)}(\lambda),& l_i=0,\\
	E_{i,i}(\lambda)=\begin{bmatrix}
	E_{i}^{(\alpha)}(\lambda)& E_{i}^{(\beta)}(\lambda)\\
	& I_{\Hs_i^{l_i}}
	\end{bmatrix},&
	F_{i}(\lambda)=\begin{bmatrix}
	F_{i}^{(\alpha)}(\lambda)& \\
	F_{i}^{(\beta)}(\lambda)& I_{\Hs_i^{l_i}}
	\end{bmatrix},& l_i>0,\\
	\end{array}
\]

\[
	\begin{array}{l l l}
	E_{j,i}(\lambda)=E_{j,i}^{(\alpha)}(\lambda),& l_i=0,\\
	E_{j,i}(\lambda)=\begin{bmatrix}
	E_{j,i}^{(\alpha)}(\lambda)&E_{j,i}^{(\beta)}(\lambda)
	\end{bmatrix},& l_i>0.
	\end{array}
\]
Then the operator matrices $\mathcal{E}(\lambda)$ and $\mathcal{F}(\lambda)$  in the equivalence relation \eqref{1eqdef} are
\[	
	\mathcal{E}(\lambda)=\begin{bmatrix}
	E_{1,1}(\lambda)&\hdots &E_{1,n}(\lambda)\\
	\vdots&\ddots &\vdots\\
	E_{n,1}(\lambda)&\hdots &E_{n,n}(\lambda)\\
	\end{bmatrix},\quad 
	\mathcal{F}(\lambda)=\begin{bmatrix}
	F_{1}(\lambda)\\
	 &\ddots &\\
	&&F_{n}(\lambda)\\
	\end{bmatrix}.
\]

\end{thm}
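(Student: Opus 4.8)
The plan is to establish Theorem \ref{polygenlin} by applying Theorem \ref{2genP1lin} once per column, iterating over $i = 1, \ldots, n$ (the final form is independent of the order, but I would fix this order for concreteness). At the $i$-th step I would move the diagonal entry $P_{i,i}$ to the upper-left corner via the reordering \eqref{2genT2}; by hypothesis it is a degree-$d_i$ polynomial with invertible leading coefficient $P_{i,i}^{(d_i)}$ whose only possibly unbounded coefficient sits at index $l_i$, so it qualifies as the polynomial $P$ in Theorem \ref{2genP1lin}. The remaining entries of column $i$ play the role of $Q$, those of row $i$ the role of $X$, and the complementary $(n-1)\times(n-1)$ submatrix the role of $Z$. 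Theorem \ref{2genP1lin} then replaces $P_{i,i}$ by the companion block $T_{i,i} - \lambda$, adjoins the extension $W_i(\lambda)$, and through case ii) contributes an extra summand $P_{i,i}^{(d_i)}$ exactly when $l_i = d_i$, that is, when $i \in L$. Collecting these summands yields $\om{P}_d$ and the extensions give $\om{W} = \bigoplus_i W_i$.

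Before iterating I would verify that the hypotheses of Theorem \ref{2genP1lin} hold at each step. The assumption $d_i > d_{j,i}$ forces every off-diagonal $P_{j,i}$ to have degree at most $d_i - 1$, so the $Q$-block is an admissible operator polynomial, and the single-unbounded-index assumption for column $i$ gives that its coefficients are bounded away from index $l_i$. For $i \in L$ the extra hypothesis $\overline{\mon{P}_{i,j}^{(k)}} \in \Bs(\Hs_j,\Hs_i)$ is precisely the boundedness $\overline{(P_{i,i}^{(d_i)})^{-1} X} \in \Bs$ demanded by the $l = d$ case of Theorem \ref{2genP1lin}, applied to the row-$i$ block $X$.

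The core of the proof is to track how the off-diagonal entries evolve. In Theorem \ref{2genP1lin} the $Q$-block passes through a step unchanged, whereas the $X$-block of the pivot row $m$ is left-multiplied by $-(P_{m,m}^{(d_m)})^{-1}$. Hence each entry $P_{j,i}$ is expanded across the $d_i$ companion columns when column $i$ is processed (there it lies in $Q$) and is multiplied by $-(P_{j,j}^{(d_j)})^{-1}$ when column $j$ is processed (there it lies in the $X$-block of row $j$); these two effects occur at different steps and, acting on disjoint rows and columns, do not interfere, so regardless of whether $i < j$ or $i > j$ the entry ends as $-\mon{P}_{j,i}$ filling the unique nonzero row of the block $T_{j,i}$. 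I would also check that processing a later column $m$ leaves an earlier companion block $T_{i,i}$ intact: Theorem \ref{2genP1lin} preserves its $(2,2)$-block $Z$, which here carries all previously built companion blocks, while the $X$-normalization at step $m$ alters only row $m$ and the $Q$-placement only column $m$, neither of which meets the rows and columns carrying $T_{i,i}$.

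It remains to assemble the equivalence operators and record the spectral consequences. Each step furnishes $\om{E}, \om{F}$ built from the single-column $E, F$ of Lemma \ref{2Schurpol} together with the couplings $\side{E}, \side{F}$ from \eqref{4polywide}, and $\mathcal{E}(\lambda), \mathcal{F}(\lambda)$ are the reordered products of these factors over all columns; Proposition \ref{2defcor} then transports the domain, closability, and spectral identities through each application. I expect the main obstacle to be exactly this bookkeeping: verifying that the accumulated product collapses to the stated closed form, with $E_{i,i}$ gathering the $E_i^{(\alpha)}, E_i^{(\beta)}$ pieces, the $E_{j,i}$ carrying the off-diagonal couplings, and $\mathcal{F}$ remaining block-diagonal with blocks $F_i$, and confirming that no cross-terms from distinct columns spoil this structure. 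The split $L = \emptyset$ versus $L \neq \emptyset$ adds a further layer, since the $P_{i,i}^{(d_i)}$-extensions for $i \in L$ must be carried consistently alongside the $W_i$-extensions throughout.
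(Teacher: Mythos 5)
Your proposal is correct and takes essentially the same approach as the paper: the paper's proof is precisely to apply Theorem \ref{2genP1lin} to each column of \eqref{2genP2}, using the reordering \eqref{2genT2} to bring the diagonal entry of columns $2,\hdots,n$ to the upper-left corner. The additional detail you supply — per-column hypothesis verification, the observation that the $Q$-block passes through unchanged while the $X$-block is normalized by $-(P_{m,m}^{(d_m)})^{-1}$ so that the order of processing columns is immaterial, and the assembly of $\mathcal{E}$ and $\mathcal{F}$ — is exactly the bookkeeping the paper leaves implicit, and you carry it out correctly.
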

\begin{proof}
The claims follows from applying Theorem \ref{2genP1lin} to each column in \eqref{2genP2}. However, for columns $2,\hdots,n$  reordering of the diagonal blocks as in  \eqref{2genT2} is needed to be able to apply  Theorem \ref{2genP1lin} directly.
\end{proof}
\begin{rem}
In Theorem \ref{polygenlin} the operator matrix functions $\mathcal{E}$ and $\mathcal{F}$ in the equivalence relation \eqref{1eqdef} are not specified for the case $l_i=d_i$. The reason is that then $\om{E}(\lambda)$ and $\om{F}(\lambda)$ depend on the order of which Theorem \ref{2genP1lin} is applied to the columns and  are very complicated albeit possible to determine.
\end{rem}

\begin{rem}
For operator polynomials it is common to consider equivalence after extension to a non-monic linear operator pencil, $\om{T}-\lambda\om{S}$, \cite{MR0482317}. In Theorem \ref{polygenlin} the condition that $P_{i,i}$ is invertible for $i=1,\hdots,n$ can be dropped if the matrix block in the equivalence is non-monic. However, the reduction of a non-monic pencil to an operator is as pointed out by Kato \cite[VII, Section 6.1]{MR1335452} non-trivial; see also Example \ref{ex:pencil}.
\end{rem}

There are both advantages and disadvantages of using Theorem \ref{polygenlin} instead of Lemma \ref{2Schurpol} for operator matrix polynomials. One advantage is that $\om{P}_{d}$ does not have to be invertible. Furthermore, for unbounded operators functions Theorem \ref{polygenlin} can handle more cases since  it allows  $l_i\neq l_j$ while in Lemma \ref{2Schurpol}, $P_l$ is unbounded for at most one $l\in\{0,\hdots,d\}$.  However, a disadvantage of this method is that the highest degree  in each column has to be in the diagonal.  Importantly, if both methods are applicable for $\om{P}$,  then the obtained linearization using Theorem \ref{polygenlin} and Lemma \ref{2Schurpol} is the same up to ordering of the spaces. 
Even if the conditions on $\om{P}$ in Lemma \ref{2Schurpol}  and/or Theorem \ref{polygenlin}  are not satisfied an equivalent operator matrix function $\widehat{\om{P}}$ that satisfies these conditions can in many cases still be found. For example, Lemma \ref{2Schurpol} cannot be applied if the highest degree in the columns, $d_i$, are not the same. However, for $\lambda\in\Cdom\setminus\{0\}$ an equivalent operator matrix function is obtained as 
\[
	\widehat{\om{P}}(\lambda):=\om{P}(\lambda)
	\begin{bmatrix}
	\lambda^{d-d_1}\\
	& \ddots\\
	& & \lambda^{d-d_n}
	\end{bmatrix}, \quad \lambda\in\Cdom,
\]
where in $\widehat{\om{P}}$, the highest degree is the same in each column, unless one column is identically $0$. However, the coefficient to the highest order, $\widehat{\om{P}}_d$, might still be non-invertible and the boundedness condition might not be satisfied. Even if all conditions are satisfied the method increases the size of the linearization and introduces false solutions at $0$.  This is connected to the \emph{column reduction} concept for matrix polynomials discussed for example in \cite{MR1223472}. Due to these common problems that restrict use of  Lemma \ref{2Schurpol}  and the problems that can occur when trying  to find a suitable equivalent problem, we 
prefer to use the results in Theorem \ref{polygenlin}. Therefore we develop a method that for a given operator matrix polynomial $\om{P}$ provides an equivalent operator matrix polynomial $\widehat{\om{P}}$ for which the conditions in Theorem \ref{polygenlin} are satisfied. 

\subsection{Column reduction of operator matrix polynomials}\label{sec:red}
Theorem \ref{polygenlin} is only applicable when the diagonal entries in \eqref{2genP2} are of strictly higher degree than the degrees of the rest of the entries in the same column. 
The aim of this subsection is to find for given operator matrix polynomial $\om{P}$ a sequence of transformations that yields an equivalent operator matrix polynomial, where the diagonal entries have the highest degrees.

One type of column reduction algorithms of polynomial matrices was considered in \cite{MR1223472}, but the column reduction algorithms presented in this section are different also in the finite dimensional case. Naturally, new challenges emerge in the infinite dimensional case and when some of the operators are unbounded. This can be seen in the following example, which also illustrates that it is not necessary to have an equivalence in each step.

\begin{ex}\label{3redex}
Consider the operator matrix function $\om{P}:\C\rightarrow \Ls(\Hs_1\oplus\Hs_2\oplus\Hs_3)$
\[
	\om{P}(\lambda):=
	\begin{bmatrix}
	\lambda A&B&\lambda C\\
	\lambda D+\widehat{D} &\lambda G &\lambda^2 H+\widehat{H} \\
	J &&\lambda L
	\end{bmatrix},\quad \lambda\in\C,
\]
on its natural domain. $\om{P}$ does not have the highest degrees in the diagonal entries. However, under the assumptions stated at the end of the example, an equivalent operator matrix polynomial can be found, where the highest degrees are on the diagonal. In the following, we will apply particular transformations that for the general case are defined in \eqref{3redk}. Let $\widetilde{\mathcal{K}}_1$ denote the operator matrix
\[
	\widetilde{\mathcal{K}}_1:=\begin{bmatrix}
	I_{\Hs_1} & &\\
	-DA^{-1}& I_{\Hs_2}\\
	& &I _{\Hs_3}
	\end{bmatrix}.
\]	
The operator matrix function $\widetilde{\mathcal{K}}_1\om{P}$ is then
\[
	\widetilde{\mathcal{K}}_1\om{P}(\lambda)=
	\begin{bmatrix}
	\lambda A&B&\lambda C\\
	\widehat{D} &\lambda G -DA^{-1}B&\lambda^2 H-\lambda DA^{-1}C + \widehat{H}\\
	J & &\lambda L
	\end{bmatrix},\quad \lambda\in\C,
\]
which for the first two columns has the highest degree in the diagonal but not in the last column. Let $\widetilde{\mathcal{K}}_3$ denote the operator matrix function defined by
\[
	\widetilde{\mathcal{K}}_3(\lambda):=\begin{bmatrix}
	I_{\Hs_1} & &-CL^{-1}\\
	& I _{\Hs_2}&-(\lambda H-DA^{-1}C)L^{-1}\\
	& &I_{\Hs_3}
	\end{bmatrix},\quad \lambda\in\C.
\]
Then 
\begin{equation}\label{redegmessCE}
	\widetilde{\mathcal{K}}_3(\lambda)\widetilde{\mathcal{K}}_1\om{P}(\lambda)=
	\begin{bmatrix}
	\lambda A-CL^{-1}J&B&\\
	-\lambda HL^{-1}J+\widehat{D}+DA^{-1}CL^{-1}J &\lambda G -DA^{-1}B&\widehat{H} \\
	J & &\lambda L
	\end{bmatrix}.
\end{equation}
Hence, for $\widetilde{\mathcal{K}}_3\widetilde{\mathcal{K}}_1\om{P}$ the third column has the highest degree in the diagonal. However, in the first column the entry in the diagonal is not of strictly higher degree than the rest of the column. We will therefore apply the operator matrix
\[
	\widehat{\mathcal{K}}_1:=\begin{bmatrix}
	I_{\Hs_1} & &\\
	HL^{-1}JA^{-1}& I_{\Hs_2}\\
	& &I_{\Hs_3}
	\end{bmatrix}
\]
to \eqref{redegmessCE}. In order to justify the formal steps above, we first state some conditions on $\om{P}$. 
Assume that $A$, $L$ are invertible and $CL^{-1}$, $\overline{(D-HL^{-1}J)}A^{-1}$, $HL^{-1}$ are bounded. The domain of $\om{P}$ is chosen as
\[
	\dom(\om{P}):=(\dom(A)\cap\dom(\widehat{D})\cap\dom(J))\oplus(\dom(B)\cap\dom(G))\oplus(\dom(\widehat{F})\cap\dom(L)).
	\]
Let $E:\C\rightarrow\Bs(\Hs_1,\Hs_2,\Hs_3)$ be defined as $E(\lambda):=\overline{\widehat{\mathcal{K}}_1\widetilde{\mathcal{K}}_3(\lambda)\widetilde{\mathcal{K}}_1}$, where
\[
E(\lambda)=
	\begin{bmatrix}
		I_{\Hs_1} & & -CL^{-1}\\
		-\overline{(D-HL^{-1}J)}A^{-1}&I_{\Hs_2}&-\lambda HL^{-1}+\overline{(D-HL^{-1}J)}A^{-1}CL^{-1}\\
		& & I_{\Hs_3}
	\end{bmatrix}.
\]
Define $\widehat{\om{P}}:\C\rightarrow\Ls(\Hs_1,\Hs_2,\Hs_3)$, $\dom(\widehat{\om{P}})=\dom(\om{P})$ as $\widehat{\om{P}}(\lambda):=E(\lambda)\om{P}(\lambda)$, where
\[\label{reddegex2}
	\widehat{\om{P}}(\lambda)=
\begin{bmatrix}
\lambda A-CL^{-1}J&B&\\
\widehat{D}+\overline{(D-HL^{-1}J)}A^{-1}CL^{-1}J &\lambda G -\overline{(D-HL^{-1}J)}A^{-1}B&\widehat{H} \\
J & &\lambda L
\end{bmatrix}.
\]
The operator matrix polynomial $\widehat{\om{P}}$ has the highest degrees in the diagonal. Furthermore, since $E(\lambda)$ is bounded and  invertible for $\lambda\in\C$ it follows that  $\om{P}$ and $\widehat{\om{P}}$ are equivalent on $\C$.
\end{ex}

Example \ref{3redex} indicates that in the general case it is not feasible to obtain a closed formula for the final equivalent operator matrix polynomial. However, algorithms that follow the steps in Example \ref{3redex} will below be developed for bounded operator matrix polynomials. These algorithms also work for classes of operator matrix functions with unbounded entries, as in Example \ref{3redex}, and it is in each case possible to check if one of the algorithms is applicable.

Let $\om{P}$ denote the operator matrix polynomial \eqref{2genP2} and assume that for $i\neq j$ there exists operator polynomials $K_{j,i}(\om{P})$ and $R_{j,i}(\om{P})$ such that $P_{j,i}=K_{j,i}(\om{P})P_{i,i}+R_{j,i}(\om{P})$, where $\deg R_{j,i}(\om{P})<\deg P_{i,i}(\om{P})$. A sufficient condition for the existence of these operators is that $P_{i,i}^{(d_{i,i})}$ is invertible.

The dependence on $\om{P}:\C\rightarrow\Bs(\Hs)$ is written out explicitly since we want to use $K_{j,i}(\om{P}):\C\rightarrow \Bs(\Hs_{i},\Hs_j)$ in the algorithms. Define $\om{K}_{j,i}(\om{P})
 :\C\rightarrow \Bs(\Hs)$ as
\begin{equation}\label{3redk}
	\mathcal{K}_{j,i}(\om{P}):=\left\{
	\begin{array}{c l}
	\begin{bmatrix}
	I_{\Hs_1} \\
	 &\ddots & \\
	& -K_{j,i}(\om{P})&\ddots\\
	  & &&I_{\Hs_n} \\
	\end{bmatrix},&  i\neq j\text{ ($K_{j,i}$ is in position $(j,i)$)},\vspace{3 pt}\\
	I_\Hs, & i=j.
	\end{array}\right. 
\end{equation}
Multiplying an operator matrix polynomial $\om{P}$ from the left with $\mathcal{K}_{j,i}(\om{P})$ will be called \emph{reduction of the $i$-th column in the $j$-th row}. Additionally a column in $\om{P}$ is said to be \emph{reduced} if the highest degree is in the diagonal of $\om{P}$ in that column. When we in the algorithms presented below reduce the $(i,j)$-entry in $\om{P}$ the condition that $P_{j,i}=K_{j,i}(\om{P})P_{i,i}+R_{j,i}(\om{P})$ has a solution with $\deg R_{j,i}(\om{P})<\deg P_{i,i}(\om{P})$ is not stated explicitly. Moreover, the notation $\mathcal{K}_{l:k,i}(\om{P}):=\mathcal{K}_{l,i}(\om{P})\hdots\mathcal{K}_{k,i}(\om{P})$ is used and it is clear that $\mathcal{K}_{j,i}(\om{P})$ commutes so $\mathcal{K}_{l:k,i}(\om{P})$ is independent of the ordering in the multiplication. For convenience, 
 the notation $\mathcal{K}_{i}(\om{P}) :=\mathcal{K}_{1:n,i}(\om{P})$ is used. For example, the first column in the operator function $\widehat{\om{P}}$ defined by
\begin{equation}\label{2Ai1}
	\widehat{\om{P}}:=
	\mathcal{K}_1(\om{P}) \om{P}=\begin{bmatrix}
	P_{1,1}&P_{1,2}&\hdots&P_{1,n} \\
	R_{2,1}(\om{P})& \widehat{P}_{2,2}&\hdots&\widehat{P}_{2,n} \\
	\vdots &\vdots& \ddots &\vdots\\
	R_{n,1}(\om{P})& \widehat{P}_{n,2}&\hdots&\widehat{P}_{n,n}\\
	\end{bmatrix},
\end{equation}
is reduced. The entries in $\widehat{\om{P}}$ satisfy the conditions $\deg P_{1,1}>\deg R_{j,1}(\om{P})$ and $\widehat{P}_{j,i}:=P_{j,i}-K_{j,1}(\om{P})P_{1,i}$. 

With the notation above the operator functions defined in Example \ref{3redex} reads $E:=\overline{(\mathcal{K}_1\circ\mathcal{K}_3\circ\mathcal{K}_1)(\om{P})}$ and $\widehat{\om{P}}:=\overline{(\mathcal{K}_1\circ\mathcal{K}_3\circ\mathcal{K}_1)(\om{P})}\om{P}$.

\begin{defn}\label{def:matrix}
Let  $\om{P}:\C\rightarrow\Ls\left(\bigoplus_{i=1}^n \Hs_i\right)$ denote an operator matrix function with the operator polynomial entries $P_{j,i}:\C\rightarrow\Ls\left( \Hs_i,\Hs_j\right)$ and define
its $\R^{n\times n}$ \emph{degree matrix}
\[
	D(\om{P})=\begin{bmatrix}
	d_{1,1}&\hdots&d_{1,n} \\
	\vdots & \ddots &\vdots\\
	d_{n,1} & \hdots &d_{n,n}
	\end{bmatrix},
\]
where the $(i,j)$-th entry is the degree of $P_{i,j}$ and we set $d_{i,j}=-\infty$ if $P_{i,j}=0$. For given $D(\om{P})$ we define the \emph{difference matrix} 
\[
	\Delta (\om{P}):=\begin{bmatrix}
	d_{1,1}&\hdots&d_{1,n} \\
	\vdots & \ddots &\vdots\\
	d_{n,1} & \hdots &d_{n,n}
	\end{bmatrix}-\begin{bmatrix}
	d_{1,1}&\hdots&d_{n,n} \\
	\vdots & \ddots &\vdots\\
	d_{1,1} & \hdots &d_{n,n}
\end{bmatrix}.
\] 
\end{defn}
Define the functions
\begin{equation}\label{2supermax}
	f(x,y,z)=\bigg\{\begin{array}{l l}
	\max(x,y+z)& y\geq0\\
	x& y<0		
\end{array},
\end{equation}

and
\begin{equation}\label{3maxdiff}
	f_0(x,y,z,w)=f(x,y,z)-f(0,w,z).
\end{equation}
\begin{lem}\label{3funred}
The following properties hold for \eqref{3maxdiff}:
\begin{itemize}
	\item[{\rm i)}] $f_0(x,y,z,w)\leq\max(x,y+z)$.
	\item[{\rm ii)}] $f_0$ is non-decreasing in the first and second argument.
\end{itemize}
\end{lem}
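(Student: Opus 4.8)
The plan is to reduce both claims to two elementary facts about the branch function $f$ of \eqref{2supermax}: that $f(x,y,z)\le\max(x,y+z)$ for all arguments, and that $f(0,w,z)\ge0$ for all $w,z$. Both are immediate from the two-branch definition. For the first, if $y\ge0$ then $f(x,y,z)=\max(x,y+z)$ (with equality), while if $y<0$ then $f(x,y,z)=x\le\max(x,y+z)$. For the second, $f(0,w,z)=\max(0,w+z)\ge0$ when $w\ge0$, and $f(0,w,z)=0$ when $w<0$.

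Part~i) is then one line: by \eqref{3maxdiff}, $f_0(x,y,z,w)=f(x,y,z)-f(0,w,z)\le\max(x,y+z)-0=\max(x,y+z)$, combining the two facts above. For part~ii), the structural observation is that the subtracted term $f(0,w,z)$ in \eqref{3maxdiff} depends only on $w$ and $z$, not on $x$ or $y$; hence monotonicity of $f_0$ in its first two arguments is equivalent to monotonicity of $f(\cdot,\cdot,z)$ in its first two slots. Monotonicity in $x$ is clear, since both $\max(x,y+z)$ and $x$ are non-decreasing in $x$. For monotonicity in $y$ I would compare $y_1<y_2$ by the signs: if both are $\ge0$ then $\max(x,y_1+z)\le\max(x,y_2+z)$; if both are $<0$ the value equals $x$ in both cases; and if $y_1<0\le y_2$ then $f(x,y_1,z)=x\le\max(x,y_2+z)=f(x,y_2,z)$. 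In every case the value does not decrease, so no downward jump occurs at the branch point $y=0$.

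I expect no substantial obstacle here; the statement is a routine case analysis on the sign of the second argument. The only point requiring genuine attention is the extended-real bookkeeping: the degree entries of Definition~\ref{def:matrix} may equal $-\infty$, so I would fix the conventions $-\infty+c=-\infty$ and $\max(-\infty,c)=c$ and check that the difference in \eqref{3maxdiff} remains well defined. This is precisely where the auxiliary fact $f(0,w,z)\ge0$ does extra work: since the subtrahend is never $-\infty$, the indeterminate form $(-\infty)-(-\infty)$ cannot arise, and the two displayed inequalities continue to hold verbatim in the cases that occur in the column-reduction algorithm of Section~\ref{sec:red}.
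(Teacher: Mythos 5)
Your proposal is correct and follows essentially the same route as the paper's own proof: part i) from the two inequalities $f(0,w,z)\geq 0$ and $f(x,y,z)\leq\max(x,y+z)$, and part ii) from the monotonicity of $f$ in its first two arguments, which transfers to $f_0$ because the subtrahend $f(0,w,z)$ is independent of $x$ and $y$. The only difference is that you spell out the sign case analysis and the $-\infty$ conventions explicitly, which the paper leaves implicit.
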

\begin{proof}
{\rm i)} Follows from the inequalities $f(0,w,z)\geq0$ and $f(x,y,z)\leq \max(x,y+z)$.
\hspace{6pt}{\rm ii.)}  The function $f(x,y,z)$ is non-decreasing in $x$ and $y$, which implies the same properties for $f_0$.
\end{proof}

The case $\deg \widehat{P}_{j,i}<\max \{\deg P_{j,i},\deg K_{j,1}(\om{P})P_{1,i}\}$ in \eqref{2Ai1} can only occur if $\deg P_{j,i}=\deg K_{j,1}(\om{P})P_{1,i}$ and even then it is improbable in general. Therefore, in the following we assume that $\deg \widehat{P}_{j,i}=\max \{\deg P_{j,i},\deg K_{j,1}(\om{P})P_{1,i}\}$. This means that the degree matrix of $\widehat{\om{P}}$ is
\[
	D(\widehat{\om{P}})=\begin{bmatrix}
	d_{1,1}&d_{1,2}&\hdots &d_{1,n}\\
	m_{(d_{2,1},d_{1,1}-1)}&f(d_{2,2},\delt_{2,1},d_{1,2})&\hdots &f(d_{2,n},\delt_{2,1},d_{1,n})\\
	\vdots&\vdots&\ddots &\vdots\\
	m_{(d_{n,1},d_{1,1}-1)}&f(d_{n,2},\delt_{n,1},d_{1,2})&\hdots &f(d_{n,n},\delt_{n,1},d_{1,n})
	\end{bmatrix}, 
\] 
where $f$ is defined in \eqref{2supermax} and $\delt_{j,i}:=\Delta (\om{P})_{j,i}=d_{j,i}-d_{i,i}$ denote the matrix entries in Definition \ref{def:matrix}. Moreover, $m_{(x,y)}$ denotes a value that is less than or equal to $\min(x,y)$. It then follows that the difference matrix of $\widehat{\om{P}}$ is 
\[
	\Delta (\widehat{\om{P}})\hspace{-0pt}=\hspace{-0pt}\begin{bmatrix}
	\delt_{1,1}&f_0(\delt_{1,2},\delt_{1,1},\delt_{1,2},\delt_{2,1})&\hdots &f_0(\delt_{1,n},\delt_{1,1},\delt_{1,n},\delt_{n,1})\hspace{-0pt}\\
	m_{\delt_{2,1},-1}&f_0(\delt_{2,2},\delt_{2,1},\delt_{1,2},\delt_{2,1})&\hdots &f_0(\delt_{2,n},\delt_{2,1},\delt_{1,n},\delt_{n,1})\hspace{-0pt}\\
	\vdots&\vdots&\ddots &\vdots\\
	m_{\delt_{n,1},-1}&f_0(\delt_{n,2},\delt_{n,1},\delt_{1,2},\delt_{2,1})&\hdots &f_0(\delt_{n,n},\delt_{n,1},\delt_{1,n},\delt_{n,1})\hspace{-0pt}
	\end{bmatrix}\, ,
\]
where $f_0$ is given by \eqref{3maxdiff}. Hence, the difference matrix, $\Delta(\mathcal{K}_i(\om{P}) \om{P})$, can be computed using only the difference matrix $\Delta (\om{P})$, apart from the column $i$ where an upper estimate is found. This knowledge of the difference matrix is sufficient for the presented algorithms.

\begin{lem}\label{2rowlem}
Let $\om{P}$ be the  operator matrix polynomial \eqref{2genP2}. Assume $\Delta (\om{P})_{j,i}<0$ for all $j,i\leq k-1$ with $j\neq i$ and  $\Delta (\om{P})_{k,i}\leq\delta$  for $i\leq k-1$. Define the operator matrix polynomial $\widehat{P}:=E\om{P}$ where
\[
 	E=(\mathcal{K}_{k,k-1}\circ \hdots\circ\mathcal{K}_{k,1})^{\delta+1}(\om{P}).
\]
Then $\Delta (\widehat{\om{P}})_{j,i}<0$  for $j\neq i$ and $i\leq k-1$, $j\leq k$.
\end{lem}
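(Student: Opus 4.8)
The plan is to split the statement according to the row index $j$: the rows $j\leq k-1$ are handled at once, while the single row $j=k$ carries all the content. First I would observe that every factor $\mathcal{K}_{k,i}(\om{P})$ with $i\leq k-1$ differs from the identity only in position $(k,i)$, so left multiplication by $E$ alters \emph{only} row $k$ of $\om{P}$: rows $1,\dots,k-1$ of $\widehat{\om{P}}$ coincide with those of $\om{P}$. In particular the diagonal entries $P_{i,i}$ with $i\leq k-1$, which normalize the difference matrix, are left untouched. Hence for $j\leq k-1$ and $i\leq k-1$ we get $\Delta(\widehat{\om{P}})_{j,i}=\Delta(\om{P})_{j,i}$, which is $<0$ for $j\neq i$ by hypothesis. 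This settles every case except $j=k$, so it remains to prove $\Delta(\widehat{\om{P}})_{k,i}<0$ for all $i\leq k-1$.

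For the row-$k$ analysis I would track the entries $\Delta_{k,i}$, $i\leq k-1$, as the elementary reductions are applied in sequence. Applying $\mathcal{K}_{k,i}$ replaces $P_{k,m}$ by $P_{k,m}-K_{k,i}P_{i,m}$; since $\deg K_{k,i}=\Delta_{k,i}$ when $\Delta_{k,i}\geq0$ (and $K_{k,i}=0$ otherwise), the new $(k,m)$-difference is bounded by $f(\Delta_{k,m},\Delta_{k,i},\Delta_{i,m})$ with $f$ as in \eqref{2supermax}, while the treated entry $\Delta_{k,i}$ drops below $0$ because the remainder $R_{k,i}$ has degree $<\deg P_{i,i}$. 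The decisive point is that rows $1,\dots,k-1$ never change, so throughout the whole computation $\Delta_{i,m}$ stays equal to its initial value $\Delta(\om{P})_{i,m}\leq-1$ for $i\neq m$ with $i,m\leq k-1$. Since the target is merely the strict inequality $<0$, upper bounds suffice everywhere and no generic-degree assumption is needed for the bump terms; the monotonicity of $f$ recorded in the proof of Lemma \ref{3funred} legitimizes replacing exact degrees by these estimates.

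Next I would prove the quantitative core: one full pass $\mathcal{K}_{k,k-1}\circ\cdots\circ\mathcal{K}_{k,1}$ lowers $M:=\max_{m\leq k-1}\Delta_{k,m}$ by at least $1$ whenever $M\geq0$. Two observations give this. No entry ever exceeds $M$ during a pass: each update feeds column $m$ the value $\Delta_{k,i}+\Delta_{i,m}\leq M-1$, using $\Delta_{k,i}\leq M$ inductively and $\Delta_{i,m}\leq-1$. And each column $m$ is reduced to a negative value at its own step and can thereafter be raised only by later columns $i>m$, each contributing at most $M-1$; hence its final value is $\leq M-1$. Iterating, from $\Delta(\om{P})_{k,i}\leq\delta$ for $i\leq k-1$ we obtain $\max_{m\leq k-1}\Delta_{k,m}\leq-1$ after the $\delta+1$ passes comprising $E$, that is $\Delta(\widehat{\om{P}})_{k,i}<0$ for all $i\leq k-1$, which together with the first paragraph completes the proof.

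The main obstacle I anticipate is the feedback inside a single pass: reducing a later column can re-inflate an already reduced earlier column, so the reductions are not monotone step by step, and a naive ``reduce each column once'' need not terminate. The resolution, and the reason for the exponent $\delta+1$, is precisely the strict inequality $\Delta(\om{P})_{i,m}\leq-1$ on the top $(k-1)\times(k-1)$ block from the hypothesis: every re-inflation is strictly smaller than the degree of the column producing it, which yields the uniform per-pass decrement $M\mapsto M-1$ and hence termination after exactly $\delta+1$ passes.
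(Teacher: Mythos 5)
Your proof is correct and takes essentially the same route as the paper's: both arguments show that one full pass $\mathcal{K}_{k,k-1}\circ\cdots\circ\mathcal{K}_{k,1}$ lowers the (nonnegative) maximum of the row-$k$ differences by at least one, because the top $(k-1)\times(k-1)$ block has off-diagonal differences $\leq -1$ so every re-inflation $\Delta_{k,i}+\Delta_{i,m}$ is capped at $M-1$, and then iterate $\delta+1$ times. Your explicit remark that rows $1,\dots,k-1$ are untouched (settling the cases $j\leq k-1$) is left implicit in the paper, but that is only a presentational difference.
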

\begin{proof}
Since $\Delta(\mathcal{K}_{k,1}(\om{P})\om{P})_{k,1}<0$ it follows from the definition of $f_0$ that \\ $\Delta(\mathcal{K}_{k,1}(\om{P})\om{P})_{k,i}\leq\delta$ for $2\leq i\leq k-1$. Hence, $\Delta((\mathcal{K}_{k,2}\circ\mathcal{K}_{k,1})(\om{P})\om{P})_{k,1}\leq\delta-1$, $\Delta((\mathcal{K}_{k,2}\circ\mathcal{K}_{k,1})(\om{P})\om{P})_{k,1}<0$, and $\Delta((\mathcal{K}_{k,2}\circ\mathcal{K}_{k,1})(\om{P})\om{P})_{k,i}\leq\delta$ for $3\leq i\leq k-1$. This implies $\Delta((\mathcal{K}_{k,k-1}\circ\hdots\circ\mathcal{K}_{k,1})(\om{P})\om{P})_{k,i}\leq\delta-1$ for $1\leq i\leq k-1$ and the result follows by induction.
\end{proof}

\begin{lem}\label{2diaglem}
Let $\om{P}$ be the  operator matrix polynomial \eqref{2genP2}. Assume that $\Delta(\om{P})_{j,i}\hspace{-1pt}<0$ for $k\geq i,j$ and $j\neq i>1$. Moreover, assume $\Delta(\om{P})_{j,1}\leq\Delta(\om{P})_{l,1}$ for $1<j<l\leq k$. Set $\delta:=\Delta(\om{P})_{k,1}$ and define  $\widehat{\om{P}}=E\om{P}$, where 
\[
	E:=\left\{\begin{array}{l l}
	\mathcal{K}_{2:k,1}(\om{P}),&\delta=0,\\
	\left(\mathcal{K}_{1:k,k-1}\circ\hdots\circ\mathcal{K}_{1:k,1}\circ\left(\mathcal{K}_{k:k,k-1}\circ\hdots\circ\mathcal{K}_{2:k,1}\right)^{\delta-1}\right)(\om{P}),&\delta>0.
	\end{array}
	\right. 
\] 
Then $\Delta(\widehat{\om{P}})_{j,i}<0$ for $i,j\leq k$ and $j\neq i$. 
 \end{lem}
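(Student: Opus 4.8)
The plan is to reduce everything to bookkeeping on the difference matrix $\Delta(\cdot)$, since both the hypotheses and the conclusion only involve the signs of its entries in the top-left $k\times k$ block. The sole analytic input is the transformation rule recorded before Lemma \ref{2rowlem}: applying a single $\mathcal{K}_{j,i}$ rewrites row $j$ so that the $(j,m)$-difference becomes $f(\delt_{j,m},\delt_{j,i},\delt_{i,m})$ for $m\neq i$ and drops below $0$ for $m=i$, while every diagonal degree can only increase (so that the differences of the remaining entries in a modified column only decrease). Combined with the bound $f_0\le\max(x,y+z)$ and the monotonicity of $f_0$ in its first two arguments from Lemma \ref{3funred}, and with $\delt_{1,m}<0$ for $2\le m\le k$ coming from the assumption that columns $2,\dots,k$ are reduced, this lets one compute or estimate $\Delta(\widehat{\om{P}})$ from $\Delta(\om{P})$ alone.

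I would induct on $\delta$. For $\delta=0$ the map is $E=\mathcal{K}_{2:k,1}(\om{P})$: the reductions of rows $2,\dots,k$ against the fixed pivot row $1$ commute, and since every $\delt_{j,1}\le0$ each either leaves row $j$ untouched (when $\delt_{j,1}<0$) or replaces its off-diagonal entries by $\max(\delt_{j,m},\delt_{1,m})<0$ (when $\delt_{j,1}=0$), leaving all diagonal degrees fixed. Hence column $1$ becomes reduced while columns $2,\dots,k$ stay reduced, which is exactly the claim.

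For $\delta>0$ I would exploit the identity $E_\delta=E_{\delta-1}\circ(\mathcal{K}_{k:k,k-1}\circ\cdots\circ\mathcal{K}_{2:k,1})$, valid for $\delta\ge2$, so that $E$ is one reduction of the strict lower triangle of the block followed by the map attached to $\delta-1$, with the single full reduction $\mathcal{K}_{1:k,k-1}\circ\cdots\circ\mathcal{K}_{1:k,1}$ handling the base $\delta=1$. The target is a potential argument: reducing column $i$ in a row $j>i$ raises the entries in columns $m>i$ only to $f(\delt_{j,m},\delt_{j,i},\delt_{i,m})\le\max(\delt_{j,m},\delta-1)$, because the pivot row $i$ is itself reduced and hence $\delt_{i,m}\le-1$. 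Propagating this bound along a whole sweep, ordered by the column-$1$ monotonicity hypothesis $\delt_{2,1}\le\cdots\le\delt_{k,1}=\delta$, the aim is to show that the maximum positive off-diagonal difference of the block drops from $\delta$ to $\delta-1$, so that after $\delta-1$ sweeps it is at most $1$ and the concluding full reduction of columns $1,\dots,k-1$ pushes every off-diagonal difference below $0$.

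The main obstacle is controlling the entries that a given pass does not itself re-reduce. A sweep reduces only the strict lower triangle, so it can raise the super-diagonal entries, and in particular those of the last column $k$, which is never a pivot column and is never re-reduced. The delicate point, and the place where the ordering hypothesis on column $1$ is indispensable, is to show that the simultaneous growth of the diagonal degree $d_{k,k}$ dominates the growth of the spoiled entries $d_{j,k}$ for $j<k$; the critical boundary case is equality $\delt_{j,1}=\delt_{l,1}$ in column $1$, where the two growths tie at the level $\delt_{j,1}+\delt_{1,k}$ and one must argue that the interleaving of the sub-diagonal reductions together with the concluding full reduction restores strict diagonal dominance. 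Making this compensation precise—that is, exhibiting a single monotone potential on the whole $k\times k$ block that strictly decreases under each sweep and reaches the all-negative regime after exactly the prescribed number of passes—is the technical heart of the argument, and is naturally organized as an induction mirroring the single-row estimate used in the proof of Lemma \ref{2rowlem}.
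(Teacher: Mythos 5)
Your high-level plan coincides with the paper's: treat $\delta=0$ directly, view the $\delta>0$ map as $\delta-1$ lower-triangular sweeps followed by one concluding full reduction, and run a potential argument showing each sweep lowers the positive off-diagonal excesses by one. But the proposal stops exactly where the proof has to start, and the one quantitative estimate you do offer is wrong at the intermediate stages where it is needed. You claim that reducing row $j$ against pivot row $i$ raises the entries in columns $m>i$ only to $f(\delt_{j,m},\delt_{j,i},\delt_{i,m})\leq\max(\delt_{j,m},\delta-1)$ ``because the pivot row $i$ is itself reduced and hence $\delt_{i,m}\leq-1$.'' The hypothesis $\Delta(\om{P})_{j,i}<0$ for $j\neq i>1$ guarantees $\delt_{i,m}\leq-1$ only for the \emph{initial} matrix: as soon as row $i$ has itself been reduced against pivots $1,\dots,i-1$ earlier in the same sweep, its entries in columns $m>i$ may have grown to roughly $\delt_{i,1}-1\geq 0$, so the pivot row is no longer reduced in the relevant columns. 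Consequently your proposed potential (the global maximum positive off-diagonal difference drops by one per sweep) is never established, and it is doubtful it can be in this crude form, since the growth of the spoiled entries depends on the individual column-one values $\Delta(\om{P})_{j,1}$, not just on their maximum $\delta$. A related structural problem: a clean induction on $\delta$ that re-invokes the lemma itself after one sweep, as your decomposition $E_\delta=E_{\delta-1}\circ(\mathcal{K}_{k:k,k-1}\circ\cdots\circ\mathcal{K}_{2:k,1})$ suggests, cannot work as stated, because after one sweep the columns $2,\dots,k$ are in general no longer reduced, so the lemma's own hypotheses fail for the intermediate matrix.

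What the paper supplies, and your proposal lacks, is precisely the refined per-row invariant carried through every intermediate operator $\om{P}_p^q$ (after $p$ full sweeps and $q$ partial column reductions of the next sweep). The paper proves by double induction over $(p,q)$ two conditions: \eqref{cond1}, which bounds the row-to-row increment $\Delta(\om{P}_p^q)_{j,i}-\Delta(\om{P}_p^q)_{j-1,i}$ in terms of the first-column increments $\delta_j=\Delta(\om{P})_{j,1}-\Delta(\om{P})_{j-1,1}$ (this is where the ordering hypothesis on column one enters), and \eqref{cond2}, the actual potential, stating $\Delta(\om{P}_p^q)_{j,i}\leq\max(\Delta(\om{P})_{j,1}-(p+2),-1)$ resp.\ $\max(\Delta(\om{P})_{j,1}-(p+1),-1)$ according to whether $q\geq i$ or $q<i$. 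Neither condition alone closes the induction: the increment bound \eqref{cond1} is exactly what substitutes for your false claim $\delt_{i,m}\leq-1$ when a partially-spoiled row serves as pivot, while \eqref{cond2} records the per-row (not global) decay by one per sweep; the monotonicity of $f_0$ from Lemma \ref{3funred} propagates both. Until you formulate and prove invariants of this kind, the ``technical heart'' you defer is the entire proof.
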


\begin{proof}
If $\delta=0$ the result is trivial. Now let $\delta>0$ and define for $p\in\{0,\hdots,\delta-2\}$ and $q\in\{1,\hdots,k-1\}$ the operator 
\[
	\om{P}_p^{q}:=\left(\mathcal{K}_{q+1:k,q}\circ\hdots\circ\mathcal{K}_{2:k,1}\circ\left(\mathcal{K}_{k:k,k-1}\circ \hdots\circ\mathcal{K}_{2:k,1}\right)^{p}\right)(\om{P})\om{P}
\]
 and the constants $\delta_j=\Delta(\om{P})_{j,1}-\Delta(\om{P})_{j-1,1}$, for $j=2,\hdots,k$.

The non-negative values in the first $k$ columns of $\Delta(\om{P})$  are nondecreasing in the first $k$ rows. By Lemma \ref{3funred} {\rm ii}) $f_0$ is non-decreasing in the first and second argument. Thus, the non-negative values in the first $k$ columns of $\Delta(\om{P}_p^q)$ are nondecreasing in the first $k$ rows. This also implies that there can be no positive value above the diagonal in $\Delta(\om{P}_p^{q})$.
 
 The rest of the proof relies on showing that the following conditions hold
 \begin{equation}\label{cond1}
  	\Delta(\om{P}^q_{p})_{j,i}\leq \max(\Delta(\om{P}^q_p)_{j-1,i}+\delta_j,\delta_j-1,-1),\quad   \text{ for }k\geq j> i,
  \end{equation}
\begin{equation}\label{cond2}
	\begin{array}{l l l}
	\Delta(\om{P}^q_{p})_{j,i}\leq \max(\Delta(\om{P})_{j,1}-(p+2),-1)& q\geq i, & j>i,\\
	\Delta(\om{P}^q_{p})_{j,i}\leq \max(\Delta(\om{P})_{j,1}-(p+1),-1)& q<i,& j>i.
	\end{array}
\end{equation}
The proof of these conditions is based on induction over $p$ and $q$ and it is clear from the definition of $f_0$ that \eqref{cond1} and \eqref{cond2} are satisfied for $\om{P}_0^1$. 

 For $i=q+1$ the conditions \eqref{cond1} and \eqref{cond2} are satisfied trivially for $\Delta(\om{P}^{q+1}_{p})_{j,i}$. 
 Further 
 for $j<q+2$ the induction is trivial for both  \eqref{cond1}  and  \eqref{cond2}. Hence, in the following we assume $j\geq q+2$ and $i\neq q+1$.
 Let $\Delta(\om{P}^q_{p})$ satisfy the conditions \eqref{cond1}, \eqref{cond2} and take $q<k-1$. Then since $\Delta(\om{P}_p^{q+1})_{j,i}=\Delta(\mathcal{K}_{q+2:k,q+1}(\om{P}_p^q)\om{P}_p^q)_{j,i}$, we have 
 \[
	\Delta(\om{P}_p^{q+1})_{j,i}=f_0(\Delta(\om{P}_p^q)_{j,i},\Delta(\om{P}_p^q)_{j,q+1},\Delta(\om{P}_p^q)_{q+1,i},\Delta(\om{P}_p^q)_{i,q+1}).
 \]
 First we will show that condition \eqref{cond1} holds for $\om{P}_p^{q+1}$. Since $\Delta(\om{P}_p^q)_{q+1,i},\Delta(\om{P}_p^q)_{i,q+1}$ are independent of $j$,  \eqref{3maxdiff} gives
  \begin{multline*}
	\Delta(\om{P}_p^{q+1})_{j,i}-\Delta(\om{P}_p^{q+1})_{j-1,i}=f(\Delta(\om{P}_p^q)_{j,i},\Delta(\om{P}_p^q)_{j,q+1},\Delta(\om{P}_p^q)_{q+1,i})\\-f(\Delta(\om{P}_p^q)_{j-1,i},\Delta(\om{P}_p^q)_{j-1,q+1},\Delta(\om{P}_p^q)_{q+1,i}).
 \end{multline*}
By assumption, condition \eqref{cond1} holds for $\om{P}_p^q$ and the result follows directly from definition \eqref{2supermax}
unless $\Delta(\om{P}_p^q)_{j,q+1}\geq0$, $\Delta(\om{P}_p^q)_{j-1,q+1}<0$, and  
 \[
  	 \Delta(\om{P}_p^{q+1})_{j,i}-\Delta(\om{P}_p^{q+1})_{j-1,i}=\Delta(\om{P}_p^q)_{j,q+1}+\Delta(\om{P}_p^q)_{q+1,i}-\Delta(\om{P}_p^q)_{j-1,i}.
 \]
 The conditions $\Delta(\om{P}_p^q)_{j-1,q+1}<0$ and \eqref{cond1}, yields that $\Delta(\om{P}_p^q)_{j,q+1}<\delta_j$.
Since $j-1\geq q+1$ the non-decreasing property of $f_0$ implies that $\Delta(\om{P}_p^q)_{q+1,i}-\Delta(\om{P}_p^q)_{j-1,i}\leq0$ or $\Delta(\om{P}_p^q)_{q+1,i}<0$. In the first case we have
 \[
 	\Delta(\om{P}_p^{q+1})_{j,i}-\Delta(\om{P}_p^{q+1})_{j-1,i}\leq \Delta(\om{P}_p^q)_{j,q+1}\leq \delta_j.
\]
 In the latter case the inequality $\Delta(\om{P}_p^{q+1})_{j,i}\leq \delta_j-1$ holds.
 Hence, condition \eqref{cond1} holds for $\Delta(\om{P}_p^{q+1})_{j,i}$.

 Assume that the condition \eqref{cond2} holds for $\om{P}_p^q$. If $\Delta(\om{P}_p^q)_{j,q+1}<0$, then \eqref{cond2} holds trivially for $\Delta(\om{P}_p^{q+1})_{j,i}$.    Otherwise, it holds that
 \[
  \Delta(\om{P}_p^{q+1})_{j,i}\leq \max(\Delta(\om{P}_p^q)_{j,i},\Delta(\om{P}_p^q)_{j,q+1}+\Delta(\om{P}_p^q)_{q+1,i}).
\]
   
 Assume $i<q+1$. If $\Delta(\om{P}_p^q)_{q+1,i}\geq0$ it follows from condition \eqref{cond2} that $\Delta(\om{P}_p^q)_{q+1,i}\leq\Delta(\om{P})_{q+1,1}-(p+2)$.
  Condition \eqref{cond1} and  $\Delta(\om{P})_{q+1,i}\geq 0$ implies that $\Delta(\om{P}_p^q)_{j,q+1}\leq\Delta(\om{P})_{j,1}-\Delta(\om{P})_{q+1,1}$. Hence, $\Delta(\om{P}_p^{q+1})_{j,i}\leq  \max(\Delta(\om{P})_{j,1}-(p+2),-1)$. Otherwise, $\Delta(\om{P}_p^q)_{q+1,i}<0$, and condition \eqref{cond2} gives
   \[
   	\Delta(\om{P}_p^q)_{j,q+1}\leq \max(\Delta(\om{P})_{j,1}-(p+1),-1).
   \]
   Thus $\Delta(\om{P}_p^q)_{j,q+1}+\Delta(\om{P}_p^q)_{q+1,i}\leq \max(\Delta(\om{P})_{j,1}-(p+2),-1)$.
   
    Assume $i>q+1$. If $\Delta(\om{P}_p^q)_{q+1,i}\geq0$ it follows from condition \eqref{cond2} that $\Delta(\om{P}_p^q)_{q+1,i}\leq\Delta(\om{P})_{q+1,1}-(p+1)$.
  Condition \eqref{cond1} and  $\Delta(\om{P})_{q+1,i}\geq 0$ implies $\Delta(\om{P}_p^q)_{j,q+1}\leq\Delta(\om{P})_{j,1}-\Delta(\om{P})_{q+1,1}$. Hence, $\Delta(\om{P}_p^{q+1})_{j,i}\leq  \max(\Delta(\om{P})_{j,1}-(p+1),-1)$. Otherwise, $\Delta(\om{P}_p^q)_{q+1,i}<0$, and condition \eqref{cond2} gives
   \[
   	\Delta(\om{P}_p^q)_{j,q+1}\leq \max(\Delta(\om{P})_{j,1}-(p+1),-1).
   \]
   Thus $\Delta(\om{P}_p^q)_{j,q+1}+\Delta(\om{P}_p^q)_{q+1,i}\leq \max(\Delta(\om{P})_{j,1}-(p+1),-1)$. Hence condition \eqref{cond2} is satisfied.
  
 Assume $q=k-1$. Then we show the conditions \eqref{cond1}, \eqref{cond2} for $\om{P}^{1}_{p+1}:=\mathcal{K}_{2:k,1}(\om{P}^{k+1}_{p})\om{P}^{k+1}_{p}$. This is done similarly as for $q<k-1$ with the exception that $i>1$, which implies that only one case has to be considered in \eqref{cond2}. 

In conclusion, $\Delta(P_{d-2}^{k-1})_{j,i}\leq0$ holds for $k\geq j>i$ due to condition \eqref{cond2} and for $j<i\leq k$ the inequality holds since $f_0$ is non-decreasing in the first two arguments.
By definition we have $\widehat{\om{P}}=\mathcal{K}_{1,k,k-1}\circ\hdots\circ\mathcal{K}_{1:k,1}(P_{d-2}^{k-1})P_{d-2}^{k-1}$, which satisfies the conditions in the theorem. 
\end{proof}

The following propositions present two algorithms that for given operator matrix polynomial $\om{P}$ generates an equivalent operator matrix polynomial $\widehat{\om{P}}$, where the highest degrees are in the diagonal. The algorithm in Proposition \ref{polyrref} usually preserves a greater number of the original operator polynomial entries and exploits the structure of $\om{P}$. However, it is only applicable when $\Hs_i\simeq\Hs_j$ for $i,j\in\{1,\dots,n\}$. In the algorithms presented in Propositions \ref{polyrref}  and \ref{polyrref2},  $J_{i,j}$  denote the operator matrix permuting the rows of entries $i$ and $j$.

\begin{prop}\label{polyrref}
Let  $\om{P}$ be defined as \eqref{2genP2} and assume that $\Hs_i=\Hs_j$ for $i,j\in\{1,\dots,n\}$. Define the algorithm:

	\begin{enumerate}
	\item Set $\om{P}_1:=\om{P}$, $E_1:=I$, and $k:=1$. 
	\item If $k=n$, set $\om{P}'_k:=\om{P}_k$ and  $E'_k:=E_k$. Else, let $i\geq k$ be the least index such that $\Delta(\om{P}_k)_{i,k}\geq\Delta(\om{P}_k)_{l,k}$ for all $l\geq k$. 
	Set $\om{P}'_k:=\mathcal{K}_{k+1:n,k}(J_{k,i}\om{P}_k)J_{k,i}\om{P}_k$ and  $E'_k:=\mathcal{K}_{k+1:n,k}(J_{k,i}\om{P}_k)J_{k,i}E_k$.
	\item Set $\widecheck{\om{P}}_k:=J_{1,k}\om{P}'_kJ_{1,k}$ and  $\widecheck{E}_k:=J_{1,k}E_k'$.
	\item Let $J$ be the operator matrix that permutes the $2,\hdots,k$ diagonal operators in $\widecheck{\om{P}}_k$ to obtain $\widetilde{\om{P}}_k:=J\widecheck{\om{P}}
	_kJ^{-1}$, which satisfies  $\Delta(\widetilde{\om{P}}_k)_{i,1}\leq\Delta(\widetilde{\om{P}}_k)_{j,1}$ for all $j>i>1$ and define  $\widetilde{E}_k:=J\widecheck{E}_k$.
	\item  Obtain $\widehat{E}$ and $\widehat{\om{P}}_{k}$ by applying Lemma \ref{2diaglem} on $\widetilde{\om{P}}_{k}$ and set  $\widehat{E}_k:=\widehat{E}		\widetilde{E}_k$.
	\item Set $\om{P}_{k+1}:=J_{1,k}J^{-1}\widehat{\om{P}}_kJJ_{1,k}$ and $E_{k+1}=J_{1,k}J^{-1} \widehat{E}_k$.
	\item If $k=n$ set $\widehat{\om{P}}:=\om{P}_{k+1}$, $E:=E_{k+1}$ and terminate. Else set $k:=k+1$ and return to $(2)$.
	\end{enumerate}

By applying the algorithm to $\om{P}$,  we obtain operator matrix functions $\widehat{\om{P}}:\C\rightarrow\Ls(\Hs_1^n)$ and an invertible  $E:\C\rightarrow\Bs(\Hs_1^n)$ such that
\[
E(\lambda)\om{P}(\lambda)=\widehat{\om{P}}(\lambda)=\begin{bmatrix}
\widehat{P}_{1,1}(\lambda) &\hdots&\widehat{P}_{1,n}(\lambda) \\
\vdots & \ddots &\vdots\\
\widehat{P}_{n,1}(\lambda) & \hdots &\widehat{P}_{n,n}(\lambda)
\end{bmatrix}, \quad \lambda\in\C,
\]
where $\deg \widehat{P}_{i,i}>\deg \widehat{P}_{j,i}$ for $i\neq j$. \\
\end{prop}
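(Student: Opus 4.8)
The plan is to establish three things: that $E$ is a bounded and invertible operator matrix function, that the accumulated transformation acts purely from the left so that $E(\lambda)\om{P}(\lambda)=\widehat{\om{P}}(\lambda)$ with trivial right factor, and that $\widehat{\om{P}}$ carries its highest degree on the diagonal of every column. The third point is the substantive one and I would treat it by a loop invariant driven by Lemma \ref{2diaglem} and the monotonicity in Lemma \ref{3funred}; the first two are bookkeeping.

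First I would observe that every elementary factor produced by the algorithm is bounded and boundedly invertible: each $\mathcal{K}_{j,i}(\cdot)$ in \eqref{3redk} is unit triangular with a single operator-polynomial entry $K_{j,i}$ (which exists, and has bounded coefficients, precisely when the algorithm is applicable), and each $J_{i,j}$ is a permutation; finite products of such operators remain in $\Bs(\Hs_1^n)$ and invertible, so $E:=E_{n+1}$ does too. To see $E\om{P}=\widehat{\om{P}}$ I would prove by induction that $E_{k+1}\om{P}=\om{P}_{k+1}$. The only delicate point is the conjugation by permutations in steps (3), (4) and (6): substituting the definitions, the right-hand factors telescope, giving $\om{P}_{k+1}=J_{1,k}J^{-1}\widehat{E}JJ_{1,k}\,\om{P}'_k$ since $J^{-1}J=I$ and $J_{1,k}J_{1,k}=I$. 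Applying the same cancellation to the $E$-bookkeeping yields $E_{k+1}\om{P}=\om{P}_{k+1}$, so no right factor survives and the relation \eqref{1eqdef} holds with $F=I$.

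For the degree property I would prove, by induction on $k$ (with the vacuous base $I_0$), the invariant $I_k$: in $\om{P}_{k+1}$ the first $k$ columns are reduced, i.e.\ $\Delta(\om{P}_{k+1})_{j,i}<0$ for all $j\neq i$ with $i\le k$, where $\Delta_{j,i}=d_{j,i}-d_{i,i}$ as in Definition \ref{def:matrix}. Assuming $I_{k-1}$, step (2) (for $k<n$) moves a column-$k$ entry of maximal degree among rows $\ge k$ onto the diagonal by $J_{k,i_0}$; as this swap only permutes rows $\ge k$, the first $k-1$ columns stay reduced. The key gain is that afterwards $\Delta_{j,k}\le 0$ for every $j>k$, so when $\mathcal{K}_{k+1:n,k}$ reduces column $k$ below the diagonal, the induced change of an entry in a column $i\le k-1$ is bounded, by Lemma \ref{3funred}(i) and the update rule \eqref{3maxdiff}, by $\max(\Delta_{j,i},\Delta_{j,k}+\Delta_{k,i})$, which is strictly negative because $\Delta_{j,i}<0$ and $\Delta_{j,k}+\Delta_{k,i}\le\Delta_{k,i}<0$. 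Thus step (2) preserves $I_{k-1}$ and makes column $k$ reduced in the rows $>k$. Steps (3)–(4) then move column $k$ into the first slot and reorder the diagonal blocks so that Lemma \ref{2diaglem} applies: the columns now in positions $2,\dots,k$ are the former reduced columns $1,\dots,k-1$, and step (4) makes the below-diagonal degrees of the new first column nondecreasing. Lemma \ref{2diaglem} reduces the whole $k\times k$ block; since its factors use only row indices $\le k$, the rows $>k$ (already reduced in columns $\le k$) are untouched. Undoing the permutations in step (6) permutes only within the block and preserves reducedness, so $I_k$ follows. At $k=n$ the block is all of $\om{P}$—and step (2) doing nothing is harmless, since Lemma \ref{2diaglem} itself renders the first column's diagonal dominant by division—so $I_n$ is exactly $\deg\widehat{P}_{i,i}>\deg\widehat{P}_{j,i}$ for $i\neq j$.

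The hard part will be the inductive preservation of $I_{k-1}$ in step (2): reducing a later column must not raise the degree of an already-reduced earlier column in the rows below the active block. This is exactly where the maximal-degree pivoting $J_{k,i_0}$ is indispensable, for it forces $\Delta_{j,k}\le 0$ below the diagonal and thereby, through the monotonicity of $f_0$ recorded in Lemma \ref{3funred}, keeps the disturbance term $\Delta_{j,k}+\Delta_{k,i}$ negative. A secondary point needing care is verifying that the conjugations in steps (3), (4), (6) deliver precisely the hypotheses of Lemma \ref{2diaglem} and that the reordering in step (4) is always realizable; both rest on the fact that $I_{k-1}$ controls the first $k-1$ columns in \emph{every} row, so that the block handed to Lemma \ref{2diaglem} genuinely has its columns $2,\dots,k$ reduced.
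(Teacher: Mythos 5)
Your proposal is correct and takes essentially the same route as the paper's proof: induction on $k$ with the invariant that $\om{P}_k=E_k\om{P}$ and $\Delta(\om{P}_k)_{j,i}<0$ for $j\neq i$, $i\leq k-1$ in \emph{all} rows, where the maximal-degree pivot $J_{k,i_0}$ in step (2) is exactly what keeps the earlier reduced columns intact (since $\Delta_{j,k}\leq 0$ forces $\Delta_{j,k}+\Delta_{k,i}<0$), and Lemma \ref{2diaglem} is then applied to the permuted block in steps (3)--(5) before undoing the permutations in step (6). Your extra bookkeeping (boundedness and invertibility of $E$, and the telescoping cancellation $J_{1,k}J^{-1}JJ_{1,k}=I$ showing no right factor survives) only makes explicit what the paper's proof leaves implicit.
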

\begin{proof}
The result holds trivially for $k=1$ and the proof for $k>1$ is by induction. In the inductive step we show that $\om{P}_k=E_k\om{P}$ and $\Delta(\om{P}_k)_{j,i}<\Delta(\om{P}_k)_{i,i}$ for all $j\in\{1,\dots,n\}$, $i\in\{1,\dots,k-1\}$, and $j\neq i$.
 
Assume that induction hypothesis holds for $k\geq1$.
By applying step 2 it follows that $\om{P}'_k=E'_k\om{P}$. Further since $\Delta(J_{k,i}\om{P}_k)_{k,k}\geq \Delta(J_{k,i}\om{P}_k)_{l,k}$, the condition $\Delta(J_{k,i}\om{P}_k)_{j,i}<0$ for $j>k$ and $i\leq k$ implies 
the condition $\Delta(\om{P}_k)'_{j,i}<0$ for $j>k$ and $i\leq k$. 
After step 3 we have $\widecheck{\om{P}}_k=\widecheck{E}_k\om{P}J_{1,k}$ and the inequality  $\Delta(\widecheck{\om{P}}_k)_{j,i}<\Delta(\widecheck{\om{P}}_k)_{i,i}$  holds for all $j\in\{1,\dots,n\}$ and $i\in\{2,\dots,k\}$, since the $k$-th column is swapped with column one.\\
The existence of $J$ in step 4 is obvious and from the definitions $\widetilde{\om{P}}_k=\widetilde{E}_k\om{P}J_{1,k}J^{-1}$ and $\Delta(\widetilde{\om{P}}_k)_{j,i}<\Delta(\widetilde{\om{P}}_k)_{i,i}$ for all $j\in\{1,\dots,n\}$ and $i\in\{2,\dots,k\}$.\\
By construction  $\widetilde{\om{P}}_k$ satisfies the assumptions of Lemma \ref{2diaglem}. This lemma then implies that $\widehat{\om{P}}_k=\widehat{E}_k\om{P}J_{1,k}J^{-1}$ and $\Delta(\widetilde{\om{P}}_k)_{j,i}<\Delta(\widetilde{\om{P}}_k)_{i,i}$ for all $j\in\{1,\dots,n\}$ and $i\in\{1,\dots,k\}$. 

Hence, $\widehat{\om{P}}_k$ satisfies the desired condition for $\om{P}_{k+1}$, but the equivalence is $\widehat{\om{P}}_k=\widehat{E}_k\om{P}J_{1,k}J^{-1}$. Step $6$ finds an equivalence of the desired type, $\om{P}_{k+1}=E_{k+1}\om{P}$ and since $J_{1,k}J^{-1}$ is a permutation operator matrix of first $k$ rows the condition $\Delta(\widetilde{\om{P}}_k)_{j,i}<\Delta(\widetilde{\om{P}}_k)_{i,i}$ for all $j\in\{1,\dots,n\}$, $i\in\{1,\dots,k\}$ and $i\neq j$ implies the same conditions for $\om{P}_{k+1}$. Hence, the result follows by induction. 
\end{proof}

\begin{prop}\label{polyrref2}
Let  $\om{P}$ be defined as \eqref{2genP2} and define the algorithm: 

	\begin{enumerate}
	\item Set $\om{P}_2:=\om{P}$, $E_2:=I$, and $k:=2$. 
	\item Obtain $E$ and $\om{P}_{k}'$ by applying Lemma \ref{2rowlem} on $\om{P}_{k}$ and set  $E_k':=E{E}_k$.
	\item Set $\widecheck{\om{P}}_k:=J_{1,k}\om{P}'_kJ_{1,k}$ and  $\widecheck{E}_k:=J_{1,k}E_k'$.
	\item Let $J$ be the operator matrix that permutes the $2,\hdots,k$ diagonal operators in $\widecheck{\om{P}}_k$ to obtain $\widetilde{\om{P}}_k:=J\widecheck{\om{P}}
	_kJ^{-1}$, which satisfies  $\Delta(\widetilde{\om{P}}_k)_{i,1}\leq\Delta(\widetilde{\om{P}}_k)_{j,1}$ for all $j>i>1$ and define  $\widetilde{E}_k:=J\widecheck{E}_k$.
	\item Obtain $\widehat{E}$ and $\widehat{\om{P}}_{k}$ by applying Lemma \ref{2diaglem} on $\widetilde{\om{P}}_{k}$ and set  $\widehat{E}_k:=\widehat{E}		\widetilde{E}_k$.
	\item Set $\om{P}_{k+1}:=J_{1,k}J^{-1}\widehat{\om{P}}_kJJ_{1,k}$ and $E_{k+1}=J_{1,k}J^{-1} \widehat{E}_k$.
	\item If $k=n$ set $\widehat{\om{P}}:=\om{P}_{k+1}$, $E:=E_{k+1}$ and terminate. Else set $k:=k+1$ and return to $(2)$.
	\end{enumerate}

By applying the algorithm to $\om{P}$,  we obtain operator matrix functions $\widehat{\om{P}}:\C\rightarrow\Ls(\Hs_1\oplus\hdots \oplus\Hs_n)$ and an invertible $E:\C\rightarrow\Bs(\Hs_1\oplus\hdots \oplus\Hs_n)$ such that
\[
E(\lambda)\om{P}(\lambda)=\widehat{\om{P}}(\lambda)=\begin{bmatrix}
\widehat{P}_{1,1}(\lambda) &\hdots&\widehat{P}_{1,n}(\lambda) \\
\vdots & \ddots &\vdots\\
\widehat{P}_{n,1}(\lambda) & \hdots &\widehat{P}_{n,n}(\lambda)
\end{bmatrix}, \quad \lambda\in\C,
\]
where $\deg \widehat{P}_{i,i}>\deg \widehat{P}_{j,i}$ for $i\neq j$. \\
\end{prop}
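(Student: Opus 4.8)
The plan is to mimic the induction in the proof of Proposition \ref{polyrref}, since steps $3$–$7$ of the two algorithms coincide and only step $2$ differs (Lemma \ref{2rowlem} in place of the pivot-and-reduce operation that needs $\Hs_i\cong\Hs_j$). I would induct on $k$ from $2$ to $n$ with the hypothesis that $\om{P}_k=E_k\om{P}$ for an invertible $E_k$ and that the leading $(k-1)\times(k-1)$ block of $\om{P}_k$ is already \emph{reduced}, meaning $\Delta(\om{P}_k)_{j,i}<0$ for all $i,j\leq k-1$ with $i\neq j$ (here $\Delta$ is the difference matrix of Definition \ref{def:matrix}, whose diagonal vanishes). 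The base case $k=2$ is trivial because $\om{P}_2=\om{P}$, $E_2=I$, and a $1\times1$ block carries no off-diagonal entry. At termination $k=n$ the block is all of $\om{P}$, so $\Delta(\widehat{\om{P}})_{j,i}<0$ for every $i\neq j$, which is exactly the asserted $\deg\widehat{P}_{i,i}>\deg\widehat{P}_{j,i}$.

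For the inductive step I would follow the six operations, tracking both the factorization and the difference matrix. In step $2$ the hypotheses of Lemma \ref{2rowlem} hold, since the leading block is reduced by the induction hypothesis and $\Delta(\om{P}_k)_{k,i}\leq\delta$ for $i\leq k-1$ merely fixes $\delta$; as the reducers $\mathcal{K}_{k,i}$ touch only row $k$, this yields $\om{P}_k'$ with $\Delta(\om{P}_k')_{j,i}<0$ for $j\neq i$, $i\leq k-1$, $j\leq k$, without disturbing the already reduced rows. Steps $3$ and $4$ only reorder the summands, and I would verify that after conjugating by $J_{1,k}$ and permuting the diagonal by $J$ the columns $2,\dots,k$ of $\widetilde{\om{P}}_k$ are precisely the images of the columns reduced in step $2$ (the old columns $2,\dots,k-1$ together with the old first column carried into position $k$), while step $4$ is designed to impose $\Delta(\widetilde{\om{P}}_k)_{j,1}\leq\Delta(\widetilde{\om{P}}_k)_{l,1}$ for $1<j<l\leq k$. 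These are exactly the two hypotheses of Lemma \ref{2diaglem}, so step $5$ reduces the remaining first column of the block while preserving columns $2,\dots,k$, and the conjugation in step $6$ by $J_{1,k}J^{-1}$ restores the original labeling, producing $\om{P}_{k+1}$ whose leading $k\times k$ block is reduced.

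The factorization $\om{P}_{k+1}=E_{k+1}\om{P}$ persists because every operation is left-multiplication by an invertible operator matrix: the unipotent reducers $\mathcal{K}_{j,i}$ and the summand permutations, with the conjugations in steps $3$ and $6$ only relabeling the direct summands (so no isomorphism $\Hs_i\cong\Hs_j$ is needed, in contrast to Proposition \ref{polyrref}). Composing these invertible factors gives $E_{k+1}$, and at $k=n$ the asserted invertible $E$.

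The main obstacle I anticipate is the bookkeeping of $\Delta$ under the permutations $J_{1,k}$ and $J$: one must check that conjugation permutes the entries of the difference matrix consistently, including the column-diagonal that $\Delta$ subtracts, so that reducedness of a sub-block is transported correctly and so that steps $2$–$4$ leave the two hypotheses of Lemma \ref{2diaglem} satisfied \emph{exactly}. A lighter, secondary point — handled as in Example \ref{3redex} — is ensuring the constructed $E_k$ remain in $\Bs$ when some entries are unbounded, which is where boundedness of the quotients $K_{j,i}(\om{P})$ and their closures is used; in the purely bounded case this is automatic.
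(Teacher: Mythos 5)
Your proposal is correct and follows essentially the same route as the paper: the paper's proof is exactly the induction you describe, with the hypothesis that $\om{P}_k=E_k\om{P}$ and that the leading $(k-1)\times(k-1)$ block satisfies $\Delta(\om{P}_k)_{j,i}<0$ for $i\neq j$, carried through the same steps as in Proposition \ref{polyrref} with Lemma \ref{2rowlem} replacing the pivot step and Lemma \ref{2diaglem} handling the first column. Your write-up is in fact more detailed than the paper's, which simply cites the proof of Proposition \ref{polyrref} and notes that the only change is the use of Lemma \ref{2rowlem} in step 2.
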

\begin{proof}
The proof is by induction, where we show that $\om{P}_k=E_k\om{P}$ and $\Delta(\om{P}_k)_{j,i}<\Delta(\om{P}_k)_{i,i}$ for all $j\in\{1,\dots,k-1\}$ and $i\in\{1,\dots,k-1\}$ such that $i\neq j$. The basis $\om{P}_2$ follows from definition and the proof of the induction step is very similar to the induction in Proposition \ref{polyrref}. The only difference is in step $2$, where Lemma \ref{2rowlem} is used. 
\end{proof}

\begin{rem}
Despite Proposition \ref{polyrref2} it is important to realize that when $\Hs_i\neq\Hs_j$ for some $i,j$, additional problems might occur. 
For example, consider the operator matrix polynomial $\om{P}:\C\rightarrow\Ls(\Hs\oplus\HS)$, defined as
\[
	\om{P}(\lambda)=\begin{bmatrix}
	A-\lambda &B\lambda \\
	C\lambda ^2 & D-\lambda
	\end{bmatrix},\quad \lambda\in\C.
\]
Define $\widehat{\om{P}}:\C\rightarrow\Ls(\Hs\oplus\HS)$ as
\[
	\widehat{\om{P}}(\lambda):=\mathcal{K}_{2,1}(\om{P})\om{P}(\lambda):=\begin{bmatrix}
	A-\lambda &B\lambda \\
	 CA^2&D+(CAB-I_{\HS})\lambda+CB\lambda^2
	\end{bmatrix}.
\]
$\widehat{\om{P}}(\lambda)$ has the form assumed in Theorem \ref{polygenlin}, but the highest order in the $(2,2)$-th entry, $CB$, might be degenerate for all operators $C$ and $B$ regardless if $D$ is invertible or not.
\end{rem}

By combining the results in Theorem \ref{2schurrat0}, Theorem \ref{3jordigblo}, Theorem \ref{polygenlin}, and Proposition \ref{polyrref2} (or Proposition \ref{polyrref}) we obtain a method of linearizing a class of operator matrix functions. This class consists of operator matrices where, each entry is a product and/or Schur complement  of polynomials and the method extends the applicability of linearization to a larger class compared with a method based on the results in Section $3$ alone. An illustrative example is presented in the following subsection.  
 
\subsection{Example of linearization of an operator matrix function}\label{sec:ex}
Let $M,N_i\in\Bs(\Hs)$ for $i=0,1,2,3$, $A\in\Bs(\Hs,\HS)$, $C_i\in\Ls(\Hs,\HS)$ for $i=0,1,2$, $D_0\in\Ls(\HS)$, $B,D_1,D_2,Q\in\Bs(\HS)$, and $P_0,P_1\in \Ls(\HS,\Hs)$. Further assume that there is a $j$ and an $l$ such that $C_i\in\Bs(\Hs,\HS)$ for $i\neq j$ and $P_i\in\Bs(\HS,\Hs)$ for $i\neq l$. Let $D:\C\rightarrow\Ls(\HS)$ be defined  as $D(\lambda)=D_2\lambda^2+D_1\lambda+D_0$, $\lambda\in\C$. If $j=l=0$ let $\Cdom:=\rho(D)$ else  $\Cdom:=\rho(D)\setminus \{0\}$. Finally assume that $D^{-1}(\lambda)C_j$ for $\lambda\in\Cdom$ is bounded on $\dom (C_j)$, which is dense in $\Hs$ and $N_3$, and $D_2Q$ are invertible operators.

In each step the operator matrix function is defined on its natural domain.
Consider the operator matrix function $\om{S}:\Cdom\rightarrow\Ls(\Hs\oplus\HS)$, 
\[\om{S}(\lambda)=
	\begin{bmatrix}
	 (M-\lambda)(N_3\lambda^3+N_2\lambda^2+N_1\lambda+N_0)&P_1\lambda+P_0&\\
	 A\lambda-(B-\lambda)D^{-1}(\lambda)(C_2\lambda^2+C_1\lambda+C_0)&Q\lambda
	\end{bmatrix}.
\]
This function can be linearized by the following steps:

Theorem \ref{3jordigblo} states that after $I_{\Hs}$-extension $\om{S}$ is equivalent to $\widehat{\om{S}}:\Cdom\rightarrow\Ls(\Hs^2\oplus\HS)$, 
\[\widehat{\om{S}}(\lambda):=
	\begin{bmatrix}
	 M-\lambda&&P_1\lambda+P_0\\
	 -I &N_3\lambda^3+N_2\lambda^2+N_1\lambda+N_0\\
	 &A\lambda-(B-\lambda)D^{-1}(\lambda)(C_2\lambda^2+C_1\lambda+C_0)&Q\lambda
	\end{bmatrix}.
\]
Theorem \ref{2schurrat0} states that  $\widehat{\om{S}}$ is after $D$-extension equivalent to $\om{P}:\Cdom\rightarrow\Ls(\Hs^2\oplus\HS^2)$, 
\[\om{P}(\lambda):=
	\begin{bmatrix}
	 M-\lambda&&&P_1\lambda+P_0\\
	 -I_\Hs &N_3\lambda^3+N_2\lambda^2+N_1\lambda+N_0\\
	 &A\lambda&B-\lambda&Q\lambda\\
	 & C_2\lambda^2+C_1\lambda+C_0&D(\lambda)
	\end{bmatrix}.
\]

$\om{P}$ is an operator matrix polynomial, but in the last two columns the highest degree is not strictly in the diagonal. 
Hence, an equivalent problem has to be found. Apply the algorithm given in Proposition \ref{polyrref2} to $\om{P}$. This results in the equivalent operator function $\widehat{\om{P}}:=\mathcal{K}_{4,3}(\om{P})\om{P}$, 
\[\widehat{\om{P}}(\lambda)=
	\begin{bmatrix}
	 M-\lambda&&&P_1\lambda+P_0\\
	 -I_\Hs &N_3\lambda^3+N_2\lambda^2+N_1\lambda+N_0\\
	 &A\lambda&B-\lambda&Q\lambda\\
	 & G\lambda^2+(C_1+KA)\lambda+C_0&D_B& D_2Q\lambda^2+KQ\lambda
	\end{bmatrix},
\]
where $G=C_2+D_2A$, $\dom(G)=\dom(C_2)$, $D_B:=D_2B^2+D_1B+D_0$, $\dom(D_B)=\dom(D_0)$, and $K:=D_1+D_2B$. In $\widehat{\om{P}}$
the highest degrees are in the diagonal and at most one coefficient in $G\lambda^2+(C_1+KA)\lambda+C_0$ and $P_1\lambda+P_0$ are unbounded. Hence, Theorem \ref{polygenlin} can be applied. Define $\mon{G}:=(D_2Q)^{-1}G$, $\mon{K}:=(D_2Q)^{-1}K$, $\mon{C}_i:=(D_2Q)^{-1}C_i,$ and $\mon{D}_B:=(D_2Q)^{-1}D_B$. Let  $\om{W}$ denote the function defined in Theorem \ref{polygenlin}. Then is $\widehat{\om{P}}(\lambda)$ after $\om{W}(\lambda)$-extension equivalent to $\om{T}-\lambda$ on $\Cdom$, where the operator matrix $\om{T}\in\Ls(\Hs^4\oplus\HS^3)$ is defined as
\[
\om{T}:=
	\begin{bmatrix}
	 M&&&& &P_1&P_0\\
	 N_3^{-1} &-N_3^{-1}N_2 &-N_3^{-1}N_1&-N_3^{-1}N_0\\
	& I_{\Hs}& \\
	 && I_{\Hs} &\\
	 & & A & & B&Q\\
	 & -\mon{G}&-\mon{C}_1-\mon{K}A&-\mon{C}_0&-\mon{D}_B& -\mon{K}Q\\
	 & & &&  &   I_{\HS}&
	 	\end{bmatrix}.
\]
In conclusion, $\om{S}(\lambda)$ is after $I_{\Hs}\oplus D(\lambda)\oplus\om{W}(\lambda)$-extension equivalent to $\om{T}-\lambda$ for all $\lambda\in\Cdom$. Hence, Proposition \ref{2defcor} yields that the spectral properties of $\om{T}$ and of $\om{S}$ coincides.

\vspace{3.5mm}

{\small
{\bf Acknowledgements.} \ 
The authors gratefully acknowledge the support of the Swedish Research Council under Grant No.\ $621$-$2012$-$3863$. We sincerely thank the reviewer for 
the insightful comments, which were invaluable when revising the manuscript.
}

\bibliographystyle{alpha}
\bibliography{BIB}

\end{document}